\newcounter{tmp}
\newcommand{\ir}{[0,T]\times \mathbb{R}^n}
\newcommand{\gr}[1]{{\rm Gr}_{#1}}
\newcommand{\grp}[1]{{\rm Gr}_{{#1}+1}}
\newcommand{\dnr}{\Delta^N\times \mathbb{R}^n}
\newtheorem{theorem}{Theorem}[section]
\newtheorem{lemma}{Lemma}[section]
\newtheorem{definition}{Definition}[section]
\newtheorem{remark}{Remark}[section]
\newtheorem{proposition}{Proposition}[section]
\newtheorem{example}{Example}[section]
\begin{document}
\title{Universal Nash Equilibrium Strategies for 
\\ Differential Games}

\author{Yurii Averboukh\footnote{Institute of Mathematics and Mechanics UrB RAS \& Ural Federal University, 16, S.~Kovalevskaya str., Ekaterinburg, 620990, Russia, \texttt{ayv@imm.uran.ru, averboukh@gmail.com}}}
\maketitle 
\begin{abstract}
\noindent The paper is concerned with a two-player nonzero-sum differential game in the case when players are informed about the current position. We consider the game in control with guide strategies first proposed by Krasovskii and Subbotin. The construction of universal strategies is given both for the case of continuous and discontinuous value functions. The existence of a discontinuous value function is established. The continuous value function does not exist in the general case. In addition, we show the example of smooth value function not being a solution of the system of Hamilton--Jacobi equation.\end{abstract}

{\small \textbf{Keywords.} {Nash equilibrium, nonzero-sum differential game, control with guide strategies.}

\textbf{AMS 2010 Subject Classification.} 49N70, 91A23, 91A10, 49L20.}

\section{Introduction}
The purpose of this paper is to study Nash equilibria for a two-player deterministic differential game in the case when the players are informed about the present position. We look for the universal equilibrium solution. The term `universal Nash equilibrium strategies' means that the strategies provide the Nash equilibrium at any initial position. The notion of universality generalizes the notion of time consistency, and it is appropriative for the case when the players form their controls stepwise. Generally speaking, in this case the notion of time consistence isn't well-defined.

There are two approaches in the literature dealing with this problem (see \cite{cardal_survey}, and the references therein). The first approach is close to the so-called Folk Theorem for repeated games, and is based on the punishment strategy technique. This technique makes it possible to establish the   existence of Nash equilibrium at the given initial position in the framework of feedback strategies \cite{Cleimenov}, \cite{Kononenko} and in the framework of Friedman strategies  \cite{Tolwinski}. The set of Nash equilibria at the given initial position is characterized in \cite{Chistyakov_Nash}, \cite{Cleimenov}. The infinitesimal version of this characterization is derived in \cite{AverboukhIMM}, \cite{AverboukhAnnals}. In addition, each Nash equilibrium payoff at the given position corresponds to the pair of continuous functions; these functions are stable with respect to auxiliary zero-sum differential games, and their values at the initial position are kept along some trajectory \cite{AverboukhSb}. Note, that in this case the Nash equilibrium strategies are not universal and strongly time consistent.

The key idea of the second approach  is to find a Nash equilibrium payoff as a solution of the system of Hamilton--Jacobi equations  \cite{Basar}, \cite{Case}, \cite{Friedman}. In this case the universal Nash equilibrium can be constructed. In particular, they are strongly time consistent. However, the existence theorem for the system of Hamilton--Jacobi equations is established only for some  cases  of the games in one dimension \cite{Bres2}, \cite{Bres3}, \cite{Cardal_Plaskatz}.


In this paper we consider the Nash equilibrium for deterministic differential games in  control with guide strategies. These strategies was first proposed by Krasovskii and Subbotin for zero-sum differential games \cite{NN_PDG_en}. In the framework of this formalization the player forms his control stepwise. It is assumed that the player measures the state of the system only in the times of control correction. At the time  of control correction  the player estimates the state of the system using on the information about the state of the system at the previous time instants of control correction. Having this estimate and the information about the real state of the system he assigns the control which is used up to the next control correction.

The choice of control with guide strategies is motivated by the following arguments. The universal optimal feedback doesn't exist even for the case of zero-sum differential game \cite{Subbotina}. The universal  solution of zero-sum differential games can be find in the class of feedback strategies depending on the precision parameter \cite{krasovskii_under_lack}, or in the class of control with guide strategies \cite{NN_PDG_en}. However, for the case of  nonzero-sum differential games existing design of Nash equilibria   in the class  feedback strategies depending on the precision parameter doesn't provide the universality.

The paper is organized as follows. In Section~2 we set up the problem, and introduce the control with guide strategies. In Section~3 we construct the Nash equilibrium in the control with guide strategies for the  case of a continuous value function. This function is to satisfy some viability conditions. 
Further in Section 3 the properties of a continuous value function are considered. We give the infinitesimal form of viability  conditions. After we compare the value functions satisfying viability conditions and the solutions of the system of Hamilton--Jacobi equations. The example showing that the continuous value function does not exist in the general case completes Section~3. In Section~4 we generalize the construction of Section~3 for the case of a upper semicontinuous value multifunction. In Section~5 we prove the existence  of a  value multifunction.

\section{Problem Statement}
Let us consider a two-player differential game with the dynamics
\begin{equation}\label{system}
    \dot{x}=f(t,x,u)+g(t,x,v), \ \ t\in [0,T], \ \ x\in\mathbb{R}^n, \ \ u\in P, \ \ v\in Q.
\end{equation} Here $u$ and $v$ are controls of  player I and  player II respectively. Payoffs are terminal. Player I wants to maximize $\sigma_1(x(T))$, whereas player II wants to maximize $\sigma_2(x(T))$. We assume that sets $P$ and $Q$ are compacts,  functions $f$, $g$, $\sigma_1$ and $\sigma_2$ are continuous. In addition, suppose that  functions $f$ and $g$ are Lipschitz continuous with respect to the phase variable and satisfy the sublinear growth condition with respect to $x$.

Denote $$\mathcal{U}:=\{u:[0,T]\rightarrow P\ \ measurable\}, $$
$$\mathcal{V}:=\{v:[0,T]\rightarrow Q\ \ measurable\}. $$

If $u\in\mathcal{U}$, $v\in\mathcal{V}$ then denote by $x(\cdot,t_0,x_0,u,v)$ the solution of the initial value problem
$$\dot{x}(t)=f(t,x(t),u(t))+g(t,x(t),v(t)), \ \ x(t_0)=x_0. $$

We assume that the players  use control with guide strategies (CGS). In this case the control depends not only on a current position but also on a vector $w$. The vector $w$ is called a guide. The dimension of the guide can  differ from~$n$.

The control with guide strategy of player I $U$ is a  triple of functions $(u,\psi^1,\chi^1)$ such that for some natural $m$ the function
$u$ maps $[0,T]\times \mathbb{R}^n\times\mathbb{R}^m$ to $P$, the function $\psi^1$ maps $[0,T]\times [0,T]\times \mathbb{R}^n\times \mathbb{R}^m$ to $\mathbb{R}^m$, and $\chi^1$ is a function of $\ir$ with values in $\mathbb{R}^m$.

The meaning of the functions $u$, $\psi^1$, and $\chi^1$ is the following. Let  $w^1$ be a $m$-dimensional vector. Further it denotes the state of first player's guide. Player I computes the value of the variable $w^1$  using the rules which are given by the strategy $U$.
The function $u(t_*,x_*,w^1)$ is a function forming the control of player I. It depends on the current position $(t_*,x_*)$ and the current state of guide $w^1$. The function $\psi^1(t_+,t_*,x_*,w^1)$ determines the value of the guide at  time  $t_+$ under condition that  at  time  $t_*$ the phase vector is equal to $x_*$, the state of guide is equal to $w^1$. The function $\chi^1(t_0,x_0)$ determines the initial state of guide.

Player I forms his control stepwise. Let $(t_0,x_0)$ be an initial position, and let $\Delta=\{t_k\}_{k=0}^r$ be a partition of the interval $[t_0,T]$. Suppose that player II chooses his control $v[\cdot]$ arbitrarily. He can also use his own CGS and form the control $v[\cdot]$ stepwise.  Denote the solution $x[\cdot]$ of equation (\ref{system}) with the initial condition $x[t_0]=x_0$ such that the control of player I is equal to $u(t_k,x_k,w_k^1)$ on $[t_{k},t_{k+1}[$ by $x^1[\cdot,t_0,x_0,U,\Delta,v[\cdot]]$. Here the state of the system at  time   $t_k$  is  $x_k$, the state of the first player's guide is $w_k^1$; it is computed by the rule  $w_{k}^1=\psi^1(t_{k},t_{k-1},x_{k-1},w_{k-1}^1)$ for $k=\overline{1,r}$, $w_0^1=\chi^1(t_0,x_0).$

The control with guide strategy of player II is defined analogously. It is a triple  $V=(v,\psi^2,\chi^2)$. Here $v=v(t_*,x_*,w^2),$ $\psi^2=\psi^2(t_+,t_*,x_*,w^2)$, $\chi^2=\chi^2(t_0,x_0))$; $(t_*,x_*)$ is a current position, $w^2$ denotes the guide of player II, $(t_0,x_0)$ is an initial position. The motion generated by a strategy $V$, a partition $\Delta$ of the interval $[t_0,T]$, and a measurable control of player II $u[\cdot]$ is also constructed stepwise. Denote it by  $x^2[\cdot,t_*,x_*,V,\Delta,u[\cdot]]$.

We assume that the Nash equilibrium is achieved  when the players get the same partition. Let  $\Delta=\{t_k\}_{k=0}^m$ be a partition of the interval $[t_0,T]$. Denote the solution $x[\cdot]$ of equation (\ref{system}) with the initial condition $x[t_0]=x_0$ such that the control of player I is equal to $u(t_k,x_k,w_k^1)$  on $[t_k,t_{k+1}[$, and the control of player II is equal to $v(t_k,x_k,w_k^2)$ on $[t_k,t_{k+1}[$ by $x^{(c)}[\cdot,t_*,x_*,U,V,\Delta]$. Here $x_k$ denotes the state of the system at  time  $t_k$; $w_k^i$ is the state of the $i$-th player's guide at  time  $t_k$. Recall that $w_{k+1}^i=\psi^i(t_{k+1},t_k,x_k,w_k^1)$,  $w_{0}^i=\chi^i(t_0,x_0)$, $i=1,2$.

\begin{definition} Let $G\subset\ir$. A pair of control with guide strategies  $(U^{*}, V^*)$ is said to be a Control with Guide Nash equilibrium on $G$   iff for all $(t_0,x_0)\in G$ the following inequalities hold:
\begin{equation*}\begin{split}
\lim_{\delta\downarrow 0}\sup\{&\sigma_1(x^2[T,t_0,x_0,V^*,\Delta,u[\cdot]]): d(\Delta)\leq\delta,u[\cdot]\in\mathcal{U}\}\\
&\leq \lim_{\delta\downarrow 0}\inf\{\sigma_1(x^{(c)}[T,t_0,x_0,U^*,V^*,\Delta]): d(\Delta)\leq\delta\},
\end{split}
\end{equation*}
\begin{equation*}\begin{split}
\lim_{\delta\downarrow 0}\sup\{&\sigma_2(x^1[T,t_0,x_0,U^*,\Delta,v[\cdot]]): d(\Delta)\leq\delta,v[\cdot]\in\mathcal{V}\}\\
&\leq \lim_{\delta\downarrow 0}\inf\{\sigma_2(x^{(c)}[T,t_0,x_0,U^*,V^*,\Delta]): d(\Delta)\leq\delta\}.
\end{split}
\end{equation*}
\end{definition}


\section{Continuous value function}\label{sec_cont}
In this section we assume that  there exists a continuous function satisfying some viability conditions.
\subsection{Construction of Nash Equilibrium Strategies}

Let $(t_*,x_*)\in \ir$, $u_*\in P$, $v_*\in Q$.

Define
$${\rm Sol}^1(t_*,x_*;v_*):={\rm cl}\{x(\cdot,t_*,x_*,u,v_*):u\in \mathcal{U}\}, $$
$${\rm Sol}^2(t_*,x_*;u_*):={\rm cl}\{x(\cdot,t_*,x_*,u_*,v):v\in \mathcal{V}\}, $$
$${\rm Sol}(t_*,x_*):={\rm cl}\{x(\cdot,t_*,x_*,u,v):u\in \mathcal{U},v\in\mathcal{V}\}. $$
Here ${\rm cl}$ denotes the closure in the space of continuous vector function on $[0,T]$. Note, that the sets ${\rm Sol}^1(t_*,x_*;v_*)$, ${\rm Sol}^2(t_*,x_*;u_*)$, ${\rm Sol}(t_*,x_*)$ are compact.

\begin{theorem}\label{th_univ_model_cont} Let a continuous function  $(c_1,c_2):\ir\rightarrow \mathbb{R}^2$ satisfy the following conditions:
\begin{list}{\rm (F\arabic{tmp})}{\usecounter{tmp}}
\item\label{cond_boundary} $c_i(T,x)=\sigma_i(x)$, $i=1,2$;
\item\label{cond_v_stable} for every $(t_*,x_*)\in \ir$, $u\in P$ there exists a motion $y^2(\cdot)\in {\rm Sol}^2(t_*,x_*;u)$ such that $c_1(t,y^2(t))\leq c_1(t_*,x_*)$ for $t\in [t_*,T]$;
\item\label{cond_minu_stable} for every $(t_*,x_*)\in \ir$, $v\in Q$ there exists a motion $y^1(\cdot)\in {\rm Sol}^1(t_*,x_*;v)$ such that $c_2(t,y^1(t))\leq c_2(t_*,x_*)$ for $t\in [t_*,T]$;
\item\label{cond_together_stable} for every $(t_*,x_*)\in \ir$ there exists a motion $y^{(c)}(\cdot)\in {\rm Sol}(t_*,x_*)$ such that $c_i(t,y^{(c)}(t))= c_i(t_*,x_*)$ for $t\in [t_*,T]$, $i=1,2$.
\end{list}
Then for each compact $G\subset\ir$ there exists a Control with Guide Nash equilibrium on $G$. The corresponding payoff of  player $i$ is $c_i(t_0,x_0)$.
\end{theorem}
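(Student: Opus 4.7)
The plan is to build $U^*$ and $V^*$ from three ingredients: a cooperative reference motion provided by (F\ref{cond_together_stable}) along which both $c_1$ and $c_2$ are conserved, and two punishment references coming from the one-sided stability conditions (F\ref{cond_v_stable}) and (F\ref{cond_minu_stable}). The construction follows the Krasovskii--Subbotin extremal-aiming scheme adapted to the nonzero-sum setting.

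As an auxiliary step I would first establish, by a standard argument, that (F\ref{cond_minu_stable}) yields for every initial position $(t_*,x_*)$ a control-with-guide strategy of player~I enforcing $\sigma_2(x^1[T,t_*,x_*,U,\Delta,v[\cdot]])\le c_2(t_*,x_*)+\eta(\delta)$ against every measurable $v[\cdot]$, where $\eta(\delta)\to 0$ as $\delta=d(\Delta)\to 0$: the guide lives in $\mathbb{R}^n$ and is advanced at each step along a motion furnished by (F\ref{cond_minu_stable}), while the control is chosen by the classical extremal-aiming rule, which keeps the real state uniformly close to the guide, and uniform continuity of $c_2$ transfers the monotonicity from the guide to the real motion up to $\eta(\delta)$. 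Symmetrically, (F\ref{cond_v_stable}) provides player~II with a punishment CGS enforcing $\sigma_1(x^2[T,\ldots])\le c_1(t_*,x_*)+\eta(\delta)$. I would then define the equilibrium pair by packing the guide of player~I as $w^1=(z^{(c)},z^{p},\lambda)\in\mathbb{R}^{2n+1}$, with $\chi^1(t_0,x_0)=(x_0,x_0,0)$: the reference $z^{(c)}$ is updated at each step along the motion furnished by (F\ref{cond_together_stable}) from $(t_k,z^{(c)}_k)$, the reference $z^{p}$ is updated as in the auxiliary step, and $\lambda\in\{0,1\}$ is a mode flag. While $\lambda=0$ and $|x_*-z^{(c)}|\le\varepsilon(\delta)$ the function $u^*$ returns the extremal-aiming control for the target $z^{(c)}$; once this threshold is violated the flag is set to $1$ and thereafter $u^*$ aims at $z^{p}$. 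The strategy $V^*$ is defined symmetrically, with the cooperative reference shared and the punishment reference coming from (F\ref{cond_v_stable}).

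The verification then splits into three estimates. Along the joint motion $x^{(c)}[\cdot,t_0,x_0,U^*,V^*,\Delta]$ both flags stay zero, both players aim at the same cooperative reference, and the real motion remains uniformly $o(1)$-close to it, so (F\ref{cond_boundary}) and (F\ref{cond_together_stable}) give $\sigma_i(x^{(c)}[T,\ldots])\to c_i(t_0,x_0)$ as $\delta\to 0$. For the unilateral motion $x^1[\cdot,t_0,x_0,U^*,\Delta,v[\cdot]]$ I would split on whether the flag is ever triggered: if not, the motion stays $\varepsilon(\delta)$-close to $z^{(c)}$ along which $c_2$ is constant, so $\sigma_2\le c_2(t_0,x_0)+\omega_{c_2}(\varepsilon(\delta))$; if yes, at the first switching instant $t_{k^*}$ one still has $c_2(t_{k^*},x[t_{k^*}])\le c_2(t_0,x_0)+\omega_{c_2}(\varepsilon(\delta))$, and from $t_{k^*}$ onward the auxiliary punishment result yields $\sigma_2(x[T])\le c_2(t_{k^*},x[t_{k^*}])+\eta(\delta)$; a symmetric argument handles $\sigma_1$. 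I expect the main obstacle to be the simultaneous calibration of $\varepsilon(\delta)$ and the partition mesh: $\varepsilon(\delta)$ must dominate the extremal-aiming tracking error so that honest cooperation against $V^*$ never trips the flag, yet satisfy $\omega_{c_2}(\varepsilon(\delta))\to 0$, and both requirements must be reconcilable via a single quantitative form of the Krasovskii--Subbotin tracking lemma on a compact tube in $\ir$ where $(c_1,c_2)$ is uniformly continuous.
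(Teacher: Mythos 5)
Your proposal is correct and follows essentially the same route as the paper: a two-part guide (a cooperative reference advanced along the motions of (F\ref{cond_together_stable}) and a punishment reference from (F\ref{cond_v_stable})/(F\ref{cond_minu_stable})) steered by the Krasovskii--Subbotin extremal shift, with a closeness test that triggers punishment upon deviation. The only structural difference is that the paper replaces your fixed threshold $\varepsilon(\delta)$ and permanent flag by a test against the adaptively accumulated squared tracking error $d^i(1+\beta(t-\tau^i))+\varphi(t-\tau^i)(t-\tau^i)$, re-evaluated at every correction time, which disposes of the calibration issue you flag automatically.
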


Note that  conditions (F1)--(F4) were first derived in \cite{AverboukhSb} as the sufficient condition for the function $(c_1,c_2)$ to provide Nash equilibrium payoff at the given position in the framework of Kleimenov approach. In that papers the obtained equilibria aren't universal.

The proof of Theorem \ref{th_univ_model_cont} is based on the Krasovskii-Subbotin extremal shift rule.

Let $G\subset\ir$ be a compact. Denote by $E$ the reachable set from $G$:
\begin{equation}\label{E_def}
E:= \{x(t,t_*,x_*,u,v):(t_*,x_*)\in G, t\in [t_*,T], u\in \mathcal{U},v\in\mathcal{V}\}.
\end{equation}
Put
\begin{equation}\label{K_def}
K:= \max\{\|f(t,x,u)+g(t,x,v)\|:t\in [0,T],x\in E, u\in P, v\in Q\},
\end{equation}
Let $L$ be a Lipschitz constant of the function $f+g$ on $[0,T]\times E\times P\times Q$, i.e. for all $t\in [0,T],x',x''\in E,u\in P,v\in Q$
$$\|f(t,x',u)+g(t,x',v)-f(t,x'',u)-g(t,x'',v)\|\leq L\|x'-x''\|. $$
Also, put
\begin{equation*}\begin{split}
\varphi^*(\delta):=\sup\{\|f&(t',x,u)+g(t',x,u)-f(t'',x,u)-g(t'',x,u)\|:\\
&t',t''\in [0,T], |t'-t''|\leq\delta, x\in E, u\in P,v\in Q\}.
\end{split}
\end{equation*} Note that $\varphi^*(\delta)\rightarrow 0$, as $\delta\rightarrow 0$.

Let us introduce the auxiliary  controlled system
\begin{equation}\label{shsystem}
\dot{s}=h(t,s,\omega_1,\omega_2), \ \ s\in\mathbb{R}^n, \ \ \omega_i\in \Omega_i.
\end{equation}

Below we consider two cases.
\begin{list}{(\roman{tmp})}{\usecounter{tmp}}
\item $\Omega_1=P$, $\Omega_2=Q$, $h=f+g$;
\item $\Omega_1=P\times Q$, $\Omega_2=\varnothing$, $h=f+g$.
\end{list}
Note that in  both cases  system (\ref{shsystem}) satisfies the Isaacs condition.

Put $\beta:= 2 L$, $R:=\max\{\|s'-s''\|:s',s''\in E\}$, $\varphi(\delta)=4\varphi^*(\delta)R+4K^2\delta$.

The following lemma was proved by Krasovskii and Subbotins (see \cite{NN_PDG_en})
\begin{lemma}\label{lm_krasovskii}
Let $s_1^0,s_2^0\in\mathbb{R}^n$, $t_*\in [0,T]$, $\omega_1^*\in\Omega_1,$ $\omega_2^*\in\Omega_2$ satisfy the following conditions
$$\max_{\omega_1\in \Omega_1}\min_{\omega_2\in \Omega_2}\langle s_2^0-s_1^0,h(t_*,s_1^0,\omega_1,\omega_2) \rangle=\min_{\omega_2\in \Omega_2}\langle s_2^0-s_1^0,h(t_*,s_1^0,\omega_1^*,\omega_2) \rangle, $$
$$\min_{\omega_2\in \Omega_2}\max_{\omega_1\in \Omega_1}\langle s_2^0-s_1^0,h(t_*,s_1^0,\omega_1,\omega_2) \rangle=\max_{\omega_1\in \Omega_1}\langle s_2^0-s_1^0,h(t_*,s_1^0,\omega_1,\omega_2^*) \rangle. $$

If $s_1(\cdot)$ is a solution of the initial value problem
$$\dot{s}_1=h(t,s_1,\omega_1^*,\omega_2(t)), \ \ s_1(t_*)=s_1^0, $$
and $s_2(\cdot)$ is a solution of the initial value problem
$$\dot{s}_2=h(t,s_2,\omega_1(t),\omega_2^*), \ \ s_2(t_*)=s_2^0, $$ for some measurable controls $\omega_1(\cdot)$ and $\omega_2(\cdot)$, then  for all $t_+\in [t_*,T]$ the estimate
$$\|s_2(t_+)-s_1(t_+)\|^2\leq \|s_2^0-s_1^0\|^2(1+\beta(t_+-t_*))+\varphi(t_+-t_*)\cdot(t_+-t_*)$$ is fulfilled.
\end{lemma}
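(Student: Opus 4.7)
My approach is to set $\mu(t):=\|s_2(t)-s_1(t)\|^2$ and drive a Gronwall-type differential inequality for $\mu$ anchored at the frozen data $(t_*,s_1^0,s_2^0)$. Differentiating along the two controlled ODEs gives
\[
\dot{\mu}(t) = 2\langle s_2(t)-s_1(t),\, h(t,s_2(t),\omega_1(t),\omega_2^*) - h(t,s_1(t),\omega_1^*,\omega_2(t))\rangle.
\]
I would first insert $\pm h(t,s_1(t),\omega_1(t),\omega_2^*)$; the spatial Lipschitz bound for $h$ then absorbs one piece into a term bounded by $2L\mu(t)$, which is the source of the eventual $(1+\beta(t_+-t_*))$ factor with $\beta=2L$.

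The remaining quantity, $2\langle s_2(t)-s_1(t),\, h(t,s_1(t),\omega_1(t),\omega_2^*)-h(t,s_1(t),\omega_1^*,\omega_2(t))\rangle$, is where the extremal-shift hypothesis must be exploited. I would freeze the data by adding and subtracting $h(t_*,s_1^0,\omega_1(t),\omega_2^*)-h(t_*,s_1^0,\omega_1^*,\omega_2(t))$ in the right slot, and then replacing $s_2(t)-s_1(t)$ by $s_2^0-s_1^0$ in the main term. The two saddle-point identities assumed in the lemma coincide under the Isaacs condition on a common value $V^*$, so for every measurable $\omega_1(\cdot),\omega_2(\cdot)$ one has
\[
\langle s_2^0-s_1^0,\, h(t_*,s_1^0,\omega_1(t),\omega_2^*)-h(t_*,s_1^0,\omega_1^*,\omega_2(t))\rangle \le V^*-V^* = 0.
\]
This is the key cancellation; the frozen term drops out of the upper bound entirely.

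What is left consists of two kinds of defect: (a) the substitution $(t,s_1(t))\mapsto (t_*,s_1^0)$ in the slots of $h$, controlled by $\varphi^*(t-t_*)+LK(t-t_*)$ and paired via Cauchy--Schwarz with the bound $\|s_2^0-s_1^0\|\le R$; and (b) the replacement $s_2(t)-s_1(t)\mapsto s_2^0-s_1^0$, of norm at most $2K(t-t_*)$, paired with a difference of $h$-values of norm at most $2K$. These collect into an affine-in-$(t-t_*)$ bound of size $4\varphi^*(t-t_*)R+4K^2(t-t_*)=\varphi(t-t_*)$. To produce the linear factor $(1+\beta(t_+-t_*))$ rather than the exponential that naive Gronwall would give, I would substitute the crude a priori estimate $\mu(t)\le \mu(t_*)+4KR(t-t_*)$ (which follows from $|\dot{\mu}|\le 4KR$) into the $2L\mu(t)$ term before integrating, and then integrate from $t_*$ to $t_+$. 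The only real obstacle is the bookkeeping: every residual of order $(t-t_*)^2$ must be shown to fit inside the stated $\varphi(t_+-t_*)(t_+-t_*)$, and tracking which constants absorb which errors is the one place that requires genuine care; the saddle-point cancellation itself is the clean conceptual core of the argument.
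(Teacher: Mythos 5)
The paper never proves this lemma itself --- it is quoted from Krasovskii and Subbotin's book --- so there is no in-paper argument to compare against; judged on its own, your proof is essentially the classical extremal-shift estimate and is sound. The three ingredients are all in place: the Lipschitz splitting that yields the $2L\mu(t)$ term (hence $\beta=2L$), the freezing of the data at $(t_*,s_1^0,s_2^0)$, and the cancellation of the frozen term via the two saddle-point hypotheses together with the Isaacs condition, which is legitimately available here because $h=f+g$ is separable in $(\omega_1,\omega_2)$, as the paper notes immediately before the lemma. Your device of inserting the a priori bound $\mu(t)\le\mu(t_*)+4KR(t-t_*)$ before integrating correctly replaces the Gronwall exponential by the linear factor $(1+\beta(t_+-t_*))$, at the cost of an extra $O(\delta)$ contribution to the modulus, which is harmless. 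The classical route differs only cosmetically: one expands $\|s_2(t_+)-s_1(t_+)\|^2$ from the integral representation, so that the cross term $2\int\langle s_2^0-s_1^0,\dot s_2-\dot s_1\rangle\,dt$ carries the frozen vector $s_2^0-s_1^0$ from the start, the Lipschitz bound is applied at the frozen points to give $L\|s_2^0-s_1^0\|^2$ directly, and the squared integral supplies the $4K^2\delta$ term. One caveat: your bookkeeping does not land exactly on the stated $\varphi(\delta)=4\varphi^*(\delta)R+4K^2\delta$. Since the paper's $\varphi^*$ measures only the oscillation of $f+g$ in $t$ at fixed $x$, the substitution $(t,s_1(t))\mapsto(t_*,s_1^0)$ costs $\varphi^*(t-t_*)+LK(t-t_*)$ per $h$-slot, leaving a residual of order $RLK\delta$, and your item (b) produces $8K^2\delta$ rather than $4K^2\delta$. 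This is immaterial for the rest of the paper --- only $\varphi(\delta)\to 0$ as $\delta\to 0$ is ever used, and the same enlargement would be needed by the integral-form proof with the paper's definition of $\varphi^*$ --- but you should state explicitly that $\varphi$ is some modulus of this form rather than claim the exact constants.
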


We assume that the $i$-th player's  guide $w^i$ is a quadruple $(d^i,\tau^i,w^{i,(a)},w^{i,(c)})$. The variable $d^i\in\mathbb{R}$ describes an accumulated error, $\tau^i\in [0,T]$ is a previous time  of the control correction, $w^{i,(a)}\in\mathbb{R}^n$ is a punishment part of the guide, and $w^{i,(c)}\in\mathbb{R}^n$ is a consistent part of the guide. The whole dimension of the guide is $2n+2$.

For any  $(t_*,x_*)\in\ir$, $u\in P$, $v\in Q$ choose and fix a motion $y^2(\cdot;t_*,x_*,u)$ satisfying  condition (F\ref{cond_v_stable}), a motion $y^1(\cdot;t_*,x_*,v)$ satisfying condition (F\ref{cond_minu_stable}), and  a motion  $y^{(c)}(\cdot;t_*,x_*)$ satisfying condition (F\ref{cond_together_stable}).

Now let us define the strategies $U^*$ and $V^*$. Below we prove that the pair of strategies $(U^*,V^*)$ is a  Control with Guide Nash equilibrium on $G$.

First put $\chi^1(t_0,x_0)=\chi^2(t_0,x_0):= (0,t_0,x_0,x_0)$.

Let  $(t,x)$ be a position, $w^i=(d^i,\tau^i,w^{i,(a)},w^{i,(c)})$ be a state of the $i$-th player's guide. Put
\begin{equation}\label{z_def}
z^i:=\left\{
\begin{array}{cl}
  w^{i,(c)}, &  \|w^{i,(c)}-x\|^2\leq d^i(1+\beta(t-\tau^i))+\varphi(t-\tau^i)(t-\tau^i), \\
  w^{i,(a)}, & \mbox{ otherwise}.
\end{array}\right.
\end{equation}

Let us consider two cases.

\begin{list}{$i=$\arabic{tmp}.}{\usecounter{tmp}}
  \item  Choose a control $u_*$ by the rule
\begin{equation}\label{u_star_choice}
\max_{u\in P}\langle z^1-x,f(t,x,u)\rangle=\langle z^1-x,f(t,x,u_*)\rangle.
\end{equation} Further, let $v^*$ satisfy the following condition
\begin{equation}\label{v_up_star_choice}
\min_{v\in Q}\langle z^1-x,g(t,x,v)\rangle=\langle z^1-x,g(t,x,v^*)\rangle.
\end{equation} Define $u(t,x,w^1):= u_*$. For $t_+>t$ put $\psi^1(t_+,t,x,w^1)$ be equal to $w^1_+=(d_+^1,\tau_+^1,w^{1,(a)}_+,w^{1,(c)}_+)$, where
$$d_+^1:= \|z^1-x\|^2,\ \ \tau_+^1:=t,\ \ w^{1,(a)}_+:= y^1(t_+;t,z^1,v^*),\ \ w^{1,(c)}_+:=y^{(c)}(t_+;t,z^1).$$
  \item Let a control $v_*$ be such that
\begin{equation}\label{v_star_choice}
\max_{v\in Q}\langle z^2-x,g(t,x,v)\rangle=\langle z^2-x,g(t,x,v_*)\rangle.
\end{equation} Choose $u^*$ satisfying the  condition
\begin{equation}\label{u_up_star_choice}
\min_{u\in P}\langle z^2-x,f(t,x,u)\rangle=\langle z^2-x,f(t,x,u^*)\rangle.
\end{equation}
Set $v(t,x,w):= v_*$. For $t_+>t$ put $\psi^2(t_+,t,x,w^2)$ be equal to $w_+^2=(d_+^2,\tau_+^2,w^{2,(a)}_+,w^{2,(c)}_+)$, where $$d_+^2:= \|z^2-x\|^2,\ \ \tau_+^2:= t,\ \  w^{2,(a)}_+:= y^2(t_+;t,z^2,u^*),\ \  w^{2,(c)}_+:= y^{(c)}(t_+;t,z^2).$$
\end{list}

Note that
\begin{equation}\label{c_equality}
    c_j(t_+,w^{i,(c)}_+)=c_j(t,z^i) \ \ \mbox{for all } i,j=1,2,
\end{equation}
\begin{equation}\label{c_ineq}
    c_1(t_+,w^{2,(a)}_+)\leq c_1(t,z^2), \ \ c_2(t_+,w^{1,(a)}_+)\leq c_2(t,z^1).
\end{equation}

Below let $x_+$ denote the state of the system at  time  $t_+$. 

\begin{lemma}\label{lm_x_w_dist}
Suppose that  $z^1=z^2=z$.  If players I and II use respectively the controls  $u_*$ and $v_*$   on the interval  $[t,t_+]$, then $w^{1,(c)}_+=w^{2,(c)}_+$ and
$$\|x_+-w^{i,(c)}_+\|^2\leq d_+^i(1+\beta(t_+-\tau_+))+\varphi(t_+-\tau_+)(t_+-\tau_+). $$
\end{lemma}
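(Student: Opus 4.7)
The plan is to reduce the claim to the Krasovskii-Subbotin estimate (Lemma~\ref{lm_krasovskii}) applied in case (ii), where $\Omega_1=P\times Q$ and $\Omega_2$ is trivial. The equality $w^{1,(c)}_+=w^{2,(c)}_+$ is immediate: by construction $w^{i,(c)}_+=y^{(c)}(t_+;t,z^i)$, and the hypothesis $z^1=z^2=z$ makes both sides equal $y^{(c)}(t_+;t,z)$.

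For the quantitative bound, set $s_1(\cdot):=x(\cdot)$, the true trajectory on $[t,t_+]$ generated from $x$ by the constant controls $u_*,v_*$, and set $s_2(\cdot):=y^{(c)}(\cdot;t,z)$, which lies in ${\rm Sol}(t,z)$ and is thus a trajectory (or uniform limit of trajectories) of (\ref{system}) starting at $z$, generated by some measurable controls. In the notation of Lemma~\ref{lm_krasovskii} take $t_*=t$, $s_1^0=x$, $s_2^0=z$, and $\omega_1^*=(u_*,v_*)$. Because $h=f+g$, with $f$ depending only on $u$ and $g$ only on $v$, the separated optimality conditions (\ref{u_star_choice}) and (\ref{v_star_choice}) add to yield
\begin{equation*}
\langle z-x,\,f(t,x,u_*)+g(t,x,v_*)\rangle=\max_{(u,v)\in P\times Q}\langle z-x,\,f(t,x,u)+g(t,x,v)\rangle,
\end{equation*}
which is precisely the extremal-shift condition for $\omega_1^*$ in case (ii); the companion condition on $\omega_2^*$ is vacuous because $\Omega_2$ is trivial.

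Invoking Lemma~\ref{lm_krasovskii} then gives
\begin{equation*}
\|x_+-y^{(c)}(t_+;t,z)\|^2\leq\|z-x\|^2(1+\beta(t_+-t))+\varphi(t_+-t)(t_+-t),
\end{equation*}
and substituting $d_+^i=\|z-x\|^2$, $\tau_+^i=t$, and $w^{i,(c)}_+=y^{(c)}(t_+;t,z)$ produces the stated inequality.

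The main thing to verify carefully is that $(u_*,v_*)$ really realizes the extremal-shift condition in the degenerate form required by case (ii); the separability of $h$ in $(u,v)$ is what makes this essentially automatic. A minor technicality is that $y^{(c)}$ was defined via a closure and need not come from ordinary controls, but this is handled by approximating it uniformly by genuine trajectories, applying Lemma~\ref{lm_krasovskii} to each, and passing to the limit.
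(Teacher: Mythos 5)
Your proof is correct and follows essentially the same route as the paper: both verify that, because $z^1=z^2=z$ and $h=f+g$ separates in $(u,v)$, the pair $(u_*,v_*)$ realizes the joint extremal-shift condition, and then apply Lemma~\ref{lm_krasovskii} in case (ii) with $s_1^0=x$, $s_2^0=z$ to get the estimate, reading off $d_+^i$, $\tau_+^i$, and $w^{i,(c)}_+$ from the definition of $\psi^i$. Your added remarks on the separability argument and on approximating $y^{(c)}$ through the closure are fine elaborations of steps the paper leaves implicit.
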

\begin{proof}
The controls $u_*$ and $v_*$ satisfy the  condition
$$\max_{u\in P,v\in Q}\langle z-x,f(t,x,u)+g(t,x,v)\rangle=\langle z-x,f(t,x,u_*)+g(t,x,v_*)\rangle. $$ We apply Lemma \ref{lm_krasovskii} with $\Omega_{1}=P\times Q$, $\Omega_2=\varnothing$, $h=f+g$. If $x(\cdot)=x(\cdot,t,x,u_*,v_*)$, $y^{(c)}(\cdot)=y^{(c)}(\cdot;t,z)$, then
$$
\|x(t_+)-y^{(c)}(t_+)\|^2\leq \|x-z\|^2(1+\beta(t_+-t))+\varphi(t_+-t)\cdot (t_+-t).
$$
The definition of the strategies $U^*$ and $V^*$ yields that $w_+^{i,(c)}=y^{(c)}(t_+)$ for $i=1,2$. By construction of the functions $\psi_i$, $i=1,2$ we have that $t=\tau^i_+$, and $d_+^i=\|x-z\|^2$. This completes the proof of the Lemma.
\end{proof}

\begin{lemma}\label{lm_x_zero_dis}
If player I uses the control $u_*$ on the interval  $[t,t_+]$, then
$$\|x_+-w^{1,(a)}_+\|^2\leq d_+^i(1+\beta(t_+-\tau_+))+\varphi(t_+-\tau_+)(t_+-\tau_+),\ \  i=1,2. $$
\end{lemma}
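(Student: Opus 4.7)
The plan is to apply Lemma \ref{lm_krasovskii} in case (i), i.e.\ with $\Omega_1 = P$, $\Omega_2 = Q$ and $h = f+g$, taking $t_* := t$, $s_1^0 := x$, $s_2^0 := z^1$, $\omega_1^* := u_*$ and $\omega_2^* := v^*$. The two saddle-type identities required by that lemma are automatic here: the pairing $\langle z^1 - x,\, h(t,x,u,v)\rangle = \langle z^1-x,\, f(t,x,u)\rangle + \langle z^1-x,\, g(t,x,v)\rangle$ is separable in $(u,v)$, so the maxmin and the minmax coincide, and the extremizers are precisely those specified in \eqref{u_star_choice} and \eqref{v_up_star_choice}.

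Next I would identify the two trajectories in Lemma \ref{lm_krasovskii}. The real motion $x[\cdot]$ on $[t,t_+]$, under the control $u_*$ of player I and any measurable control $v[\cdot]$ of player II, is the trajectory $s_1(\cdot)$ emanating from $x$ at time $t$ with $\omega_1 \equiv u_*$ held constant and $\omega_2(\cdot) = v[\cdot]$; in particular $s_1(t_+) = x_+$. The punishment reference $y^1(\,\cdot\,;t,z^1,v^*)$ belongs by definition to $\mathrm{Sol}^1(t,z^1;v^*)$, hence is a uniform limit of genuine solutions with $\omega_2 \equiv v^*$ held constant and $\omega_1(\cdot)$ a measurable player-I control; each such approximant plays the role of $s_2(\cdot)$ in Lemma \ref{lm_krasovskii}. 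Applying the lemma to these approximants and passing to the uniform limit yields
\[
\|x_+ - y^1(t_+;t,z^1,v^*)\|^2 \leq \|z^1 - x\|^2\bigl(1+\beta(t_+-t)\bigr) + \varphi(t_+-t)(t_+-t),
\]
since the right-hand side depends continuously on the endpoints only.

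By construction $d^1_+ = \|z^1-x\|^2$, $\tau^1_+ = t$ and $w^{1,(a)}_+ = y^1(t_+;t,z^1,v^*)$, so this is exactly the asserted inequality for $i = 1$. The case $i = 2$ is obtained by the symmetric application of Lemma \ref{lm_krasovskii} with $s_2^0 := z^2$ and $\omega_1^* := u^*$, $\omega_2^* := v_*$ from construction~2, under the parallel hypothesis that player~II applies $v_*$ on $[t,t_+]$. The only nontrivial care-point is the passage through the closure built into $\mathrm{Sol}^1$, which is handled because the Krasovskii--Subbotin estimate survives uniform limits; everything else — the verification of the saddle-point condition — is immediate from the Isaacs (separable) structure of $h = f+g$.
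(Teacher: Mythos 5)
Your proof is correct and follows the paper's own argument: both apply Lemma \ref{lm_krasovskii} in case (i) with $s_1^0=x$, $s_2^0=z^1$, $\omega_1^*=u_*$, $\omega_2^*=v^*$, identify the real motion (player I holding $u_*$, player II measurable) with $s_1(\cdot)$ and the punishment trajectory $y^1(\cdot;t,z^1,v^*)$ with $s_2(\cdot)$, and then read off the asserted estimate from $d^1_+=\|z^1-x\|^2$ and $\tau^1_+=t$. You are in fact more explicit than the paper on two points it leaves implicit --- the verification of the saddle conditions via separability of $\langle z^1-x,\,f+g\rangle$ and the passage through the closure defining ${\rm Sol}^1$ --- and your reading of the (typographically garbled) case $i=2$ as the symmetric statement for player II's guide is the natural one.
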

\begin{proof}
We apply Lemma \ref{lm_krasovskii} with $\Omega_1=P$, $\Omega_2=Q$ and $h=f+g$. The choice of  $u_*$ (see (\ref{u_star_choice})) and $v^*$ (see (\ref{v_up_star_choice})) yields that the  inequality
$$
\|x(t_+)-y^1(t_+)\|^2\leq \|x-z^1\|^2(1+\beta(t_+-t))+\varphi(t_+-t)\cdot (t_+-t)
$$ holds with $x(\cdot)=x(\cdot,t,x,u_*,v)$ and $y^1(\cdot)=y^1(\cdot,t,z^1,v^*)$. Since $w^{1,(a)}_+=y^1(t_+)$, $\tau^1_+=t$, and $d_+^1=\|x-z^1\|^2$, the conclusion of the Lemma follows.
\end{proof}

We need the following estimate. Let   $\Delta=\{t_k\}_{k=0}^r$ be a partition of the interval  $[t_0,T]$, and let $\{\gamma_k\}_{k=0}^r$ be a collection of numbers such that
\begin{equation}\label{gamma_step}
\gamma_{k+1}\leq \gamma_k(1+\beta(t_{k+1}-t_k))+\varphi((t_{k+1}-t_k))\cdot ((t_{k+1}-t_k)).
\end{equation}
Then,
\begin{equation}\label{kras_subb_lemma}
    \gamma_{k}\leq [\gamma_0+(1+(t_k-t_0))\varphi(d(\Delta))]\exp\beta(t_k-t_0).
\end{equation}

\vspace{2pt}
\noindent\textit{Proof of Theorem \ref{th_univ_model_cont}}
First let us show that for all $(t_0,x_0)\in G$ the following equality is valid:
\begin{equation}\label{sigma_c}
    c_j(t_0,x_0)=\lim_{\delta\downarrow 0}\inf\{\sigma_j(x^{(c)}[T,t_0,x_0,U^*,V^*,\Delta]), d(\Delta)\leq\delta\}, \ \ j=1,2.
\end{equation}

Let   $\Delta=\{t_k\}_{k=1}^r$ be a partition of the interval $[t_0,T]$. 
Denote the state of the system at time $t_k$ by $x_k$, the state of the $i$-th player's  guide by $w^i_k=(d_k^i,\tau_k,w^{(a),i}_k,w^{i,(c)}_k)$. Also let  $z^i_k$ be chosen by rule (\ref{z_def}) at  time   $t_k$. We have that  $\tau_0=t_0$, $\tau_{k+1}=t_k$ for $k\geq 0$. Moreover, $$z^1_0=w^{1,(c)}_0=w^{1,(c)}_0=z^2_0.$$ Hence using  lemma  \ref{lm_x_w_dist} inductively we get that
 \begin{equation}\label{z_w_step_eq}
 z^1_k=w^{1,(c)}_k=z^2_k=w^{2,(c)}_k,\ \ d^i_{k+1}=\|x_k-z_k^i\|^2.
\end{equation}
and
$$\|x_{k+1}-z^i_{k+1}\|^2\leq \|x_{k}-z^i_{k}\|^2(1+\beta(t_{k+1}-t_k))+\varphi(t_{k+1}-t_k)(t_{k+1}-t_k) $$ for all $k=\overline{0,N}$.

It follows from (\ref{kras_subb_lemma}) that
$$
    \|x_r-z_r^i\|^2\leq [\|x_0-z_0^i\|^2+(1+(t_r-t_0))\varphi(d(\Delta))]\exp\beta(t_r-t_0).
$$
Since $z_0^i=x_0$, we obtain that
\begin{equation}\label{x_z_estima}
\|x_r-z_r^i\|\leq \varkappa(\delta):= \Bigl[(1+(t_r-t_0))\varphi(\delta)\exp\beta(t_r-t_0)\Bigr]^{1/2},
\end{equation} where $\delta=d(\Delta)$. Note that $\varkappa(\delta)\rightarrow 0$, $\delta\rightarrow 0$.

Let $\phi_j( Y)$ be a modulus of continuity  of the function $\sigma_j$ on the set $E$
$$\phi_j(\gamma):=\sup\{|\sigma_j(x')-\sigma_j(x'')|:x',x''\in E, \|x'-x''\|\leq\gamma\}.$$  We have, that
\begin{equation}\label{sigma_cont_mod}
\|\sigma_j(x_r)-\sigma_j(z_r^i)\|\leq \phi_j(\varkappa(\delta)).
\end{equation}

Since $z^i_k=w^{i,(c)}_k$, it follows from  (\ref{c_equality}) that
$c_j(t_{k+1},w_{k+1}^{i,(c)})=c_j(t_k,z^i_k)=c_j(t_{k},w_{k}^{i,(c)})$. Therefore, using condition (F\ref{cond_boundary}) we get
$$\|\sigma_j(x[T,t_0,x_0,U^*,V^*,\Delta])-c_j(t_0,x_0)\|\leq \phi_j(\varkappa(\delta)) $$ with $\delta=d(\Delta)$. Passing to the limit we obtain equality  (\ref{sigma_c}).

Now let us show that for all $(t_0,x_0)\in G$
\begin{equation}\label{sigma_c_deviate}
     c_2(t_0,x_0)\geq \lim_{\delta\downarrow 0}\sup\{\sigma_2(x^1[T,t_0,x_0,U^*,\Delta,v[\cdot]), d(\Delta)\leq\delta,v[\cdot]\in\mathcal{V}\}.
\end{equation}

Let    $\Delta=\{t_k\}_{k=1}^r$ be a partition of the interval $[t_0,T]$, and let $v[\cdot]$ be a control of player II.
Denote the state of the system at time $t_k$ by $x_k$, the state of the first player's guide by $w^1_k=(d_k^1,\tau_k,w^{(a),1}_k,w^{1,(c)}_k)$. Also let  $z^1_k$ be chosen by  rule (\ref{z_def}) at  time   $t_k$.

We claim that inequality (\ref{gamma_step}) is valid with $\gamma_k=\|z_{k}^1-x_k\|^2$.
Note that $\tau^1_{k+1}=t_k$, $d^1_{k+1}=\|z^1_k-x_k\|^2$. If $z^1_{k+1}=w^{1,(c)}_{k+1}$, then inequality (\ref{gamma_step}) holds by construction. If $z^1_{k+1}=w^{1,(c)}_{k+1}$, then using lemma \ref{lm_x_zero_dis} we obtain that inequality (\ref{gamma_step}) is fulfilled also.

Therefore, we have inequality  (\ref{kras_subb_lemma}) with  $\gamma_0=0$ and $\gamma_k=\|z_{k}^1-x_k\|^2$. Hence, $$
\|z^1_r-x_r\|\leq \varkappa(d(\Delta)).
$$
Consequently, inequality (\ref{sigma_cont_mod}) is fulfilled for $i=1$, $j=2$.

It follows from (\ref{z_def}), (\ref{c_equality}), and (\ref{c_ineq})  that
\begin{equation}\label{c_z_k_plus_k}
    c_2(t_{k+1},z^1_{k+1})\leq c_2(t_k,z^1_k).
\end{equation}

Condition  (F\ref{cond_boundary}) and the equality  $z_0^1=x_0$ yield the inequality
$$
    \sigma_2(z_r^1)=c_2(T,z_r^1)\leq c_2(t_0,x_0).
$$
From this and (\ref{c_z_k_plus_k}) we conclude that
$$\sigma_2(x^1[T,t_0,x_0,U^*,\Delta,v[\cdot]])\leq c_2(t_0,x_0)+\phi_2(\varkappa(\delta)), $$ with $\delta=d(\Delta)$.
Passing to the limit we get inequality (\ref{sigma_c_deviate}).

Analogously one can prove the inequality
\begin{equation}\label{c_2_deviate}
     c_1(t_0,x_0)\geq \lim_{\delta\downarrow 0}\sup\{\sigma_1(x^2[T,t_0,x_0,V^*,\Delta,u[\cdot]), d(\Delta)\leq\delta,u[\cdot]\in\mathcal{U}\}.
\end{equation} Combining equality (\ref{sigma_c}) and inequalities (\ref{sigma_c_deviate}), (\ref{c_2_deviate}) we conclude that the strategies $U^*$ and $V^*$ form the Control with Guide Nash equilibrium on $G$. Moreover the Nash equilibrium payoff of player $i$ at the position $(t_0,x_0)$ is  $c_i(t_0,x_0)$.

\qed


\subsection{Infinitesimal Form of Conditions (F1)--(F4) }

Define  $$H_1(t,x,s):= \max_{u\in P}\min_{v\in Q}\langle s,f(t,x,u)+g(t,x,v)\rangle,$$ $$H_2(t,x,s):= \max_{v\in Q}\min_{u\in P}\langle s,f(t,x,u)+g(t,x,v)\rangle. $$

\begin{proposition}
Conditions (F\ref{cond_v_stable}), and (F\ref{cond_minu_stable}) are equivalent  to the the following one: the function $c_i$ is viscosity supersolution of the equation \begin{equation}\label{HJ_aux}\frac{\partial c_i}{\partial t}+H_i(t,x,\nabla c_i)=0. \end{equation}
\end{proposition}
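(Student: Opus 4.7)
The plan is to prove the equivalence separately for each $i$; by the symmetric interchange $(f,P,u)\leftrightarrow(g,Q,v)$ it suffices to treat $i=1$.

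Forward implication (F\ref{cond_v_stable})$\Rightarrow$supersolution. Fix $(t_0,x_0)\in[0,T)\times\mathbb{R}^n$ and a $C^1$ test function $\phi$ for the supersolution test at $(t_0,x_0)$: $c_1-\phi$ has a local minimum at $(t_0,x_0)$, normalized so that $\phi(t_0,x_0)=c_1(t_0,x_0)$ and $\phi\le c_1$ nearby. For any $u\in P$, (F\ref{cond_v_stable}) furnishes $y^2\in\mathrm{Sol}^2(t_0,x_0;u)$ with $c_1(t,y^2(t))\le c_1(t_0,x_0)$. Since $y^2$ lies in a closure, approximate it uniformly by classical trajectories $x_n(\cdot)=x(\cdot,t_0,x_0,u,v_n)$ with $v_n\in\mathcal{V}$. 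Combining the touching property with the non-increase of $c_1$,
\[
\phi(t_0+\tau,y^2(t_0+\tau))-\phi(t_0,x_0)\le c_1(t_0+\tau,y^2(t_0+\tau))-c_1(t_0,x_0)\le 0,\quad \tau>0.
\]
Along each $x_n$, the fundamental theorem of calculus together with the pointwise bound $\langle\nabla\phi(s,x_n),g(s,x_n,v_n(s))\rangle\ge\min_{w\in Q}\langle\nabla\phi(s,x_n),g(s,x_n,w)\rangle$ yields
\[
\phi(t_0+\tau,x_n(t_0+\tau))-\phi(t_0,x_0)\ge\int_{t_0}^{t_0+\tau}\Big[\phi_s+\langle\nabla\phi,f(s,x_n,u)\rangle+\min_{w\in Q}\langle\nabla\phi,g(s,x_n,w)\rangle\Big]ds.
\]
Passing to the limit $n\to\infty$ (using uniform convergence $x_n\to y^2$ and continuity of $\phi_s$, $\nabla\phi$, $f$, and of $(s,x)\mapsto\min_{w\in Q}\langle\nabla\phi(s,x),g(s,x,w)\rangle$, which follows from compactness of $Q$ and continuity of $g$), combining with the upper bound $\le 0$, dividing by $\tau$, and letting $\tau\downarrow 0$ yields
\[
\phi_t(t_0,x_0)+\langle\nabla\phi(t_0,x_0),f(t_0,x_0,u)\rangle+\min_{v\in Q}\langle\nabla\phi(t_0,x_0),g(t_0,x_0,v)\rangle\le 0.
\]
Maximizing over $u\in P$ gives the desired supersolution inequality $\phi_t+H_1\le 0$ at $(t_0,x_0)$.

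Reverse implication. Assume $c_1$ is a viscosity supersolution. Given $(t_*,x_*)$ and $u\in P$, the goal is to construct $y^2\in\mathrm{Sol}^2(t_*,x_*;u)$ with $c_1(t,y^2(t))\le c_1(t_*,x_*)$. This is the classical viscosity-to-viability passage from Subbotin's minimax theory. I would use either (a) Frankowska's viability characterization of supersolutions, which translates the infinitesimal PDE inequality into weak invariance of the sublevel set $\{(t,x):c_1(t,x)\le c_1(t_*,x_*)\}$ under the differential inclusion $\dot x\in f(t,x,u)+g(t,x,Q)$, yielding the trajectory directly; or (b) a direct Krasovskii--Subbotin stepwise construction: on a fine partition $\{t_k\}$ of $[t_*,T]$, select at each node $v_k\in Q$ nearly minimizing $\langle p_k,g(t_k,x_k,v)\rangle$ for a proximal supergradient (or sup-convolution gradient) $p_k$ of $c_1$ at $(t_k,x_k)$; apply the supersolution inequality to bound the per-step increment of $c_1$ by $o(t_{k+1}-t_k)$; refine the partition and extract a uniformly convergent subsequence via Arzela--Ascoli. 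The limit lies in $\mathrm{Sol}^2(t_*,x_*;u)$ by definition of the closure and satisfies the required non-increase property.

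The main obstacle is the reverse direction: reconstructing a continuous-time trajectory from the infinitesimal supersolution condition when $c_1$ is merely continuous. This requires nonsmooth tools, either Frankowska's viability theorem applied to the sublevel sets of $c_1$ or careful limit passage in a discrete Krasovskii--Subbotin scheme together with a proximal or sup-convolution regularization to extract usable supergradients. The closure in the definition of $\mathrm{Sol}^2$ is essential here, since the limit trajectory need not arise from any specific measurable $v(\cdot)$ but nevertheless lies in the required closed hull.
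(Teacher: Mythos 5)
The paper does not actually write out a proof here: it disposes of the proposition in one line by citing Subbotin's Theorem 6.4, which is precisely the equivalence between the ``stability'' property expressed through viable trajectories (your (F2)/(F3)) and the viscosity/minimax supersolution property. You are therefore reconstructing the content of the cited theorem rather than paralleling an argument in the paper. Your forward half ((F2) $\Rightarrow$ supersolution) is correct and complete: touching $c_1$ from below by a test function, running the estimate along approximating trajectories $x(\cdot,t_0,x_0,u,v_n)$, replacing $\langle\nabla\phi,g(s,x_n,v_n(s))\rangle$ by the pointwise minimum over $Q$, and passing to the limit is exactly the standard route, and the resulting inequality $\phi_t+H_1\le 0$ at local minima of $c_1-\phi$ matches the sign convention the paper uses later (Definition (U4) in Subbotin's book). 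One detail worth making explicit is that $H_1(t,x,s)=\max_u\langle s,f(t,x,u)\rangle+\min_v\langle s,g(t,x,v)\rangle$ because $f$ and $g$ separate the controls, so maximizing your inequality over $u$ really does produce $H_1$.

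The reverse half, however, is a plan rather than a proof, and it is the genuinely hard half of the equivalence. You correctly identify the two standard ways to close it --- weak invariance of the sublevel set $\{(t,x):c_1(t,x)\le c_1(t_*,x_*)\}$ under $\dot x\in f(t,x,u)+\mathrm{co}\,g(t,x,Q)$ for each fixed $u$ via a Frankowska-type viability theorem, or the Krasovskii--Subbotin stepwise construction with a compactness/limit passage --- and either one does work; but neither is carried out, and the passage from the test-function inequality to the contingent-cone condition on sublevel sets of a merely continuous $c_1$ is exactly where the nontrivial work (and the content of Subbotin's Theorem 6.4) lives. As written, your argument establishes only one implication and defers the other to an external theorem in essentially the same way the paper does, so you should either cite that theorem explicitly for the converse or carry out one of the two constructions you name.
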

This Proposition directly follows from \cite[Theorem 6.4]{Subb_book}.

Further, define a modulus derivative at the position $(t,x)$ in the direction $w\in\mathbb{R}^n$ by the rule
\begin{equation*}
{\rm d}_{abs}(c_1,c_2)(t,x;w)
:=\liminf_{\delta\downarrow 0,w'\rightarrow w}\frac{|c_1(t+\delta,x+\delta w')-c_1(t,x)|+|c_2(t+\delta,x+\delta w')-c_2(t,x)|}{\delta}.
\end{equation*}

\begin{proposition}
Condition (F\ref{cond_together_stable}) is valid if and only if for every ${(t,x)\in \ir}$ $$\inf_{w\in\mathcal{F}(t,x)}{\rm d}_{abs}(c_1,c_2)(t,x;w)=0. $$
\end{proposition}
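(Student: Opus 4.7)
Write $\mathcal{F}(t,x):=\overline{\mathrm{co}}\{f(t,x,u)+g(t,x,v):u\in P,\,v\in Q\}$, so that $\mathrm{Sol}(t,x)$ consists of absolutely continuous curves with $\dot y(s)\in\mathcal{F}(s,y(s))$ a.e.\ and initial value $x$ at time $t$. The multifunction $\mathcal{F}$ has nonempty compact convex values and is upper semicontinuous by the regularity assumptions on $f,g$.

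\textbf{Necessity.} Fix $(t,x)$ and let $y^{(c)}(\cdot)$ be the motion provided by (F4). For $\delta>0$ put
\[
w_\delta:=\frac{y^{(c)}(t+\delta)-x}{\delta}=\frac{1}{\delta}\int_t^{t+\delta}\dot y^{(c)}(\tau)\,d\tau.
\]
Since $\dot y^{(c)}(\tau)\in\mathcal{F}(\tau,y^{(c)}(\tau))$ a.e.\ and $\mathcal{F}$ is usc with compact convex values, the averages $w_\delta$ lie in an $o(1)$-neighbourhood of $\mathcal{F}(t,x)$; being uniformly bounded, they admit a subsequence $w_{\delta_k}\to w^\star\in\mathcal{F}(t,x)$. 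The constancy condition $c_i(t+\delta,x+\delta w_\delta)=c_i(t+\delta,y^{(c)}(t+\delta))=c_i(t,x)$ makes the numerator in the definition of $\mathrm{d}_{abs}$ identically zero along $\{\delta_k\}$, so $\mathrm{d}_{abs}(c_1,c_2)(t,x;w^\star)=0$ and the infimum vanishes.

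\textbf{Sufficiency.} First verify that $w\mapsto\mathrm{d}_{abs}(c_1,c_2)(t,x;w)$ is lower semicontinuous (a routine diagonal argument on the defining liminf). Combined with compactness of $\mathcal{F}(t,x)$, this implies the zero infimum is attained at some $w^\star(t,x)\in\mathcal{F}(t,x)$. Unwinding the definition, $\mathrm{d}_{abs}(c_1,c_2)(t,x;w^\star)=0$ is exactly the statement that the vector $(1,w^\star,0,0)$ lies in the Bouligand contingent cone, at the point $(t,x,c_1(t,x),c_2(t,x))$, to the closed set
\[
\Gamma:=\bigl\{(s,y,r_1,r_2)\in\ir\times\mathbb{R}^2:r_i=c_i(s,y),\ i=1,2\bigr\}.
\]
Apply the viability/weak-invariance theorem (e.g.\ \cite[Ch.~4]{Subb_book} or the Aubin viability theorem) to the autonomous inclusion
\[
(\dot s,\dot y,\dot r_1,\dot r_2)\in\{1\}\times\mathcal{F}(s,y)\times\{0\}\times\{0\},
\]
whose right-hand side inherits upper semicontinuity and compact convex values from $\mathcal{F}$. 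The viability condition at each point of $\Gamma$ is precisely the contingent-cone statement just established. The theorem produces an absolutely continuous trajectory starting at $(t_*,x_*,c_1(t_*,x_*),c_2(t_*,x_*))$ that stays in $\Gamma$ on $[t_*,T]$; its $y$-projection is a curve $y^{(c)}(\cdot)\in\mathrm{Sol}(t_*,x_*)$ along which $c_i(s,y^{(c)}(s))=c_i(t_*,x_*)$, establishing (F4).

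\textbf{Main obstacle.} The nontrivial direction is sufficiency. The liminf in $\mathrm{d}_{abs}$ only gives a direction tangent to $\Gamma$ via a specific approximating sequence $(\delta_k,w'_k)$, whereas viability produces a genuine trajectory. The bridge is the viability theorem, and the care needed lies in (i) pushing the pointwise vanishing of $\mathrm{d}_{abs}$ through lower semicontinuity and compactness to obtain a genuine selection $w^\star(t,x)\in\mathcal{F}(t,x)$, and (ii) checking that the \emph{absolute value} inside the $\liminf$ really encodes the contingent cone to $\Gamma$ in both coordinates $r_1,r_2$ simultaneously—so that one obtains invariance, not merely one-sided viability. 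Both checks are standard given the continuity of $(c_1,c_2)$ and the regularity of $\mathcal{F}$.
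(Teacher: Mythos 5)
Your proof is correct and follows essentially the same route as the paper: both reformulate (F\ref{cond_together_stable}) as viability of the graph of $(c_1,c_2)$ under the convexified inclusion, invoke the viability theorem to pass to the contingent-cone condition, and identify that condition with the vanishing of ${\rm d}_{abs}$. The only real difference is cosmetic — you carry out the necessity direction by a direct difference-quotient argument and explicitly handle the gap between the infimum being zero and being attained (via lower semicontinuity of $w\mapsto{\rm d}_{abs}$ and compactness of $\mathcal{F}(t,x)$), a point the paper's proof passes over silently.
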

\begin{proof}
Condition (F4) means that the graph of the function $(c_1,c_2)$ is viable under the differential inclusion
$$
\left(
\begin{array}{c}
  \dot{x} \\
  \dot{J}_1 \\
  \dot{J}_2
\end{array}
\right)=
{\rm co}\left\{\left(
\begin{array}{c}
  f(t,x,u)+g(t,x,v) \\
  0 \\
  0
\end{array}
\right):u\in P,v\in Q\right\}.
$$
One can rewrite this condition in the infinitesimal form \cite[Theorem 11.1.3]{Viability}: for $J_1=c_1(t,x)$, $J_2=c_2(t,x)$ and some $w\in {\rm co}\{f(t,x,u)+g(t,x,v):u\in P,v\in Q\} $ the inclusion
\begin{equation}\label{weak_invar_D}
\left(
\begin{array}{c}
  w \\
  0 \\
  0
\end{array}
\right)\in D{\rm gr}(c_1,c_2)(t,(x,J_1,J_2))
\end{equation}
 holds.
Here $D$ denotes the contingent derivative. It is defined in the following way. Let $\mathcal{G}\subset [0,T]\times \mathbb{R}^m$, $\mathcal{G}[t]$ denote a section of $\mathcal{G}$ by $t$: $$\mathcal{G}[t]:= \{w\in \mathbb{R}^m:(t,x)\in \mathcal{G}\},$$ and let the symbol ${\rm d}$ denote the Euclidian distance between a point and a set. Following \cite{Viability} set $$D\mathcal{G}(t,y):= \left\{h\in\mathbb{R}^m: \liminf_{\delta\rightarrow 0}\frac{{\rm d}(y+\delta h; \mathcal{G}[t+\delta])}{\delta}=0\right\}. $$

Let $J_i=c_i(t,x)$. We have that $(w, Y_1, Y_2)\in D{\rm gr}(c_1,c_2)(t,(x,J_1,J_2)) $   if and only if there exist sequences $\{w_k\}_{k=1}^\infty$ and $\{\delta_k\}_{k=1}^\infty$ such that $w=\lim_{k\rightarrow\infty}w_k$, and $$ Y_i=\lim_{k\rightarrow\infty}\frac{c_i(t+\delta_k,x+\delta_kw_k)-c_i(t,x)}{\delta_k}.$$
Therefore, condition (\ref{weak_invar_D}) is equivalent to the condition $d_{abs}(c_1,c_2)(t,x;w)=0$ for some $w\in {\rm co}\{f(t,x,u)+g(t,x,v):u\in P,v\in Q\}$.
\end{proof}

\subsection{System of Hamilton--Jacobi equations}
Let us show that Theorem \ref{th_univ_model_cont} generalizes the method based on the system of Hamilton--Jacobi equations.

It is well known that the solutions of the system of Hamilton--Jacobi equations provide  Nash equilibria \cite{Basar}.

For any $s\in \mathbb{R}^n$ let $\hat{u}(t,x,s_1)$ satisfy the condition
$$\langle s,f(t,x,\hat{u}(t,x,s))\rangle=\max\{\langle s,f(t,x,u)\rangle:u\in P\}, $$ and let $\hat{v}(t,x,s)$ satisfy the condition
$$\langle s,g(t,x,\hat{v}(t,x,s))\rangle=\max\{\langle s,g(t,x,u)\rangle:u\in P\}. $$
Set
$$\mathcal{H}_i(t,x,s_1,s_2):= \langle s_i, f(t,x,\hat{u}(t,x,s_1))+g(t,x,\hat{v}(t,x,s_2))\rangle. $$

Consider the system of Hamilton--Jacobi equations
\begin{equation}\label{HJsystem}
    \left\{
    \begin{array}{l}
      \frac{\partial \varphi_i}{\partial t}+\mathcal{H}_i(t,x,\nabla \varphi_1,\nabla \varphi_2)  =0,  \\
      \varphi_i(T,x)=\sigma_i(x).
    \end{array}
    \right. \ \ i=1,2
\end{equation}

\begin{proposition}
If the function $(\varphi_1,\varphi_2)$ is a classical solution of  system (\ref{HJsystem}),  then it satisfies condition (F1)--(F4). \end{proposition}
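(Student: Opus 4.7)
The idea is to verify the four conditions directly from (\ref{HJsystem}) by applying the chain rule along suitably chosen trajectories. Condition (F1) is immediate from the terminal conditions $\varphi_i(T,x)=\sigma_i(x)$ built into the system.

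For (F4), the plan is to consider the trajectory driven by the feedbacks $u^\sharp(t,x):=\hat u(t,x,\nabla\varphi_1(t,x))$ and $v^\sharp(t,x):=\hat v(t,x,\nabla\varphi_2(t,x))$ starting from $(t_*,x_*)$. Along such a trajectory $y^{(c)}(\cdot)$ the chain rule combined with (\ref{HJsystem}) yields
$$\frac{d}{dt}\varphi_i(t,y^{(c)}(t))=\frac{\partial\varphi_i}{\partial t}+\bigl\langle\nabla\varphi_i,\,f(t,y^{(c)},u^\sharp)+g(t,y^{(c)},v^\sharp)\bigr\rangle=-\mathcal{H}_i+\mathcal{H}_i=0,$$
so $\varphi_i(t,y^{(c)}(t))\equiv\varphi_i(t_*,x_*)$, which is (F4). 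For (F2) I would fix an arbitrary $u\in P$ for player I and let player II use $v^\sharp$; using that $\hat u(t,x,\nabla\varphi_1)$ maximises $\langle\nabla\varphi_1,f(t,x,\cdot)\rangle$ over $P$, the chain rule gives
$$\frac{d}{dt}\varphi_1(t,y^2(t))=\frac{\partial\varphi_1}{\partial t}+\bigl\langle\nabla\varphi_1,f(t,y^2,u)+g(t,y^2,v^\sharp)\bigr\rangle\leq\frac{\partial\varphi_1}{\partial t}+\mathcal{H}_1=0.$$
Condition (F3) is obtained symmetrically, fixing $v\in Q$ for player II and letting player I use $u^\sharp$.

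The main technical obstacle is the existence of the trajectories just described: although $\nabla\varphi_i$ is continuous, the argmax selections $\hat u,\hat v$ are in general only measurable, so $u^\sharp$ and $v^\sharp$ need not be continuous in $x$ and the corresponding ODE need not admit a classical solution. I plan to circumvent this by a stepwise approximation: given a partition $\Delta=\{t_k\}$ of $[t_*,T]$, freeze the feedbacks at the values $\hat u(t_k,x(t_k),\nabla\varphi_1(t_k,x(t_k)))$ and $\hat v(t_k,x(t_k),\nabla\varphi_2(t_k,x(t_k)))$ on each subinterval $[t_k,t_{k+1}]$; this yields a piecewise-constant-in-time, measurable control for which a genuine Carathéodory solution exists. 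Sublinear growth and the Lipschitz hypothesis on $f+g$ give uniform boundedness and equicontinuity, so as $d(\Delta)\downarrow 0$ I can extract a uniformly convergent subsequence of trajectories whose limit lies in ${\rm Sol}(t_*,x_*)$, respectively ${\rm Sol}^2(t_*,x_*;u)$ or ${\rm Sol}^1(t_*,x_*;v)$, by definition of these sets as closures. To transfer the (in)equality to the limit I would estimate the one-step increment of $\varphi_i$ via Taylor expansion around $s=t_k$: at $s=t_k$ the integrand in the chain-rule integral vanishes (for (F4)) or is nonpositive (for (F2), (F3)) by the HJ equation and the definition of $\hat u,\hat v$, so on $[t_k,t_{k+1}]$ the contribution is at most $O((t_{k+1}-t_k)^2)$ uniformly; summing over $k$ and passing to the limit then produces the required monotonicity (F2), (F3) or invariance (F4) along $y^{(c)}$, $y^2$, $y^1$.
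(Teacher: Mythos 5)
Your proposal is correct, but it takes a genuinely different route from the paper. The paper does not construct any trajectories at all: it leans on the two infinitesimal characterizations proved just before this proposition, namely that (F2)--(F3) are equivalent to $\varphi_i$ being a viscosity supersolution of $\partial_t c_i+H_i(t,x,\nabla c_i)=0$ (via Subbotin's theorem) and that (F4) is equivalent to $\inf_{w\in\mathcal{F}(t,x)}{\rm d}_{abs}(\varphi_1,\varphi_2)(t,x;w)=0$ (via Aubin's viability theorem). For a $C^1$ function these reduce to pointwise algebraic statements: the system (\ref{HJsystem}) gives $\partial_t\varphi_1+H_1\leq 0$ because $\langle\nabla\varphi_1,g(t,x,\hat v)\rangle\geq\min_{v}\langle\nabla\varphi_1,g(t,x,v)\rangle$, and substituting $w=f(t,x,\hat u(t,x,\nabla\varphi_1))+g(t,x,\hat v(t,x,\nabla\varphi_2))$ annihilates both terms of ${\rm d}_{abs}$. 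All the work of turning these pointwise conditions into actual motions in ${\rm Sol}$, ${\rm Sol}^1$, ${\rm Sol}^2$ is delegated to the cited theorems. You instead build the motions by hand: freeze the (generally only measurable) argmax feedbacks $u^\sharp,v^\sharp$ on a partition, integrate, and pass to a uniform limit lying in the relevant closure. This is self-contained and exhibits the trajectories explicitly, at the cost of redoing the approximation argument that the citations hide; your chain-rule inequalities for (F2)--(F4) are all correct. The only quantitative slip is the claimed $O((t_{k+1}-t_k)^2)$ per-step error, which would require Lipschitz continuity of $\partial_t\varphi_i$ and $\nabla\varphi_i$; for a merely classical ($C^1$) solution you only get a bound of the form $\omega(d(\Delta))\,(t_{k+1}-t_k)$ with $\omega$ a modulus of continuity, but this still sums to $\omega(d(\Delta))\,(T-t_*)\to 0$, so the argument survives unchanged.
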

\begin{proof}
Condition (F1) is obvious.

Since $(\varphi_1,\varphi_2)$ is the solution of system (\ref{HJsystem}), we have that
\begin{equation*}
\begin{split} 
 0=&\frac{\partial \varphi_1(t,x)}{\partial t}+\max_{u\in P}\langle \nabla \varphi_1(t,x),f(t,x,u)\rangle
+\langle \nabla \varphi_1(t,x),g(t,x,\hat{v}(t,x,\nabla \varphi_1(t,x)))\rangle\\
\geq  &\frac{\partial \varphi_1(t,x)}{\partial t}+\max_{u\in P}\langle \nabla \varphi_1(t,x),f(t,x,u)\rangle
+\min_{v\in Q}\langle \nabla \varphi_1(t,x),g(t,x,v)\rangle\\
=&\frac{\partial \varphi_1(t,x)}{\partial t}+H_1(t,x,\nabla \varphi_1(t,x)).
\end{split}
\end{equation*} 

The subdifferential of the smooth function $\varphi_1$ is equal to $D^-\varphi_1(t,x)=\{(\partial \varphi_1(t,x)/\partial t,\nabla \varphi_1(t,x))\}$. Therefore, $\varphi_1$ is a viscosity supersolution of equation (\ref{HJ_aux}) for $i=1$ \cite[Definition (U4)]{Subb_book}. This is equivalent to condition (F2).

Condition (F3) is proved in the same way.
\begin{equation*}
d_{abs}(\varphi_1,\varphi_2)(t,x;w)=\left|\frac{\partial \varphi_1(t,x)}{\partial t}+\langle \nabla \varphi_1(t,x),w\rangle\right|+\left|\frac{\partial \varphi_2(t,x)}{\partial t}+\langle \nabla \varphi_2(t,x),w\rangle\right|.
\end{equation*}

Substituting $w=f(t,x,\hat{u}(t,x,\nabla \varphi_1(t,x)))+g(t,x,\hat{v}(t,x,\nabla \varphi_2(t,x)))$ gives condition~(F4).
\end{proof}

Generally, there exists a smooth function $(c_1,c_2)$ satisfying conditions (F1)--(F4)  not being a solution of the system of Hamilton--Jacobi equations.

\begin{example}
 Consider the  system
\begin{equation}\label{sys_ex_dis}
    \left\{\begin{array}{cc}
      \dot{x}_1= & -v \\
      \dot{x}_2= & 2u+v
    \end{array}\right.
\end{equation}
Here $t\in [0,1]$, $u,v\in [-1,1]$. The purpose of the $i$-th player is to maximize  $x_i(1)$.

The function $(c_1^*,c_2^*)$ with $c_1^*(t,x_1,x_2)=x_1+(1-t)$, $c_2^*(t,x_1,x_2)=x_2+(1-t)$ satisfies conditions (F1)--(F4), but it is not a solution of the system of Hamilton--Jacobi equations (\ref{HJsystem}). Moreover, $c_i^*(t,x)> \varphi_i(t,x)$ for some solutions of system (\ref{HJsystem}) $(\varphi_1,\varphi_2)$.
\end{example}
\begin{proof}
First let us write down the system of Hamilton--Jacobi equations for the case under consideration. Denote $\partial \varphi_1/\partial x_j$ by $p_j$, $\partial \varphi_2/\partial x_j$ by $q_j$.

The variables  $\hat{u}$ and $\hat{v}$ satisfy the conditions
$$\max_{u\in [-1,1]}p_2u=p_2\hat{u}, \ \ \max_{v\in [-1,1]}(-q_1+q_2)v=(-q_1+q_2)\hat{v}.$$

Hence the system of Hamilton--Jacobi equations (\ref{HJsystem}) takes the form
\begin{equation}\label{HJ_example}
 \left\{
\begin{array}{cc}
  \frac{\partial \varphi_1}{\partial t}- p_1 \hat{v}+p_2(2\hat{u}+\hat{v}) & =0 \vspace{4pt},\\

  \frac{\partial \varphi_2}{\partial t}-q_1 \hat{v}+q_2(2\hat{u}+\hat{v}) & =0.
\end{array}\right.
\end{equation}

The boundary conditions are $\varphi_1(1,x_1,x_2)=x_1$, $\varphi_2(1,x_1,x_2)=x_2$.

The function $(c_1^*,c_2^*)$ satisfies conditions (F1)--(F4). Indeed, condition (F1) holds obviously. Condition (F2)  is valid with  $v=1$, analogously condition  (F3) is valid with  $u=-1$. Moreover both players can keep the values of the functions if they use the controls $v=-1$, $u=1$. This means that condition (F4) holds also.

On the other hand the pair of functions $(c_1^*,c_2^*)$ does not satisfy the system of Hamilton--Jacobi equations. Indeed,
\begin{equation*}\begin{split}
&\partial c_1^*/\partial x_1=p_1=1, \ \ \partial c_1^*/\partial x_2=p_2=0, \ \ \partial c_2^*/\partial x_1=q_1=0, \\
&\partial c_2^*/\partial x_2=q_2=1, \ \ \partial c_1^*/\partial t=\partial c_2^*/\partial t=-1.\end{split}
\end{equation*}
Therefore, $\hat{v}=1$. Substitution into the first equation of (\ref{HJ_example}) leads to the contradiction.

Further, consider the functions $\varphi_1(t,x_1,x_2)=x_1-(1-t)$, $\varphi_2^\alpha(t,x_1,x_2)=x_2+(1+2\alpha)(1-t)$. Here $\alpha$ is a parameter from  $[-1,1]$. Note that if $\hat{v}=1$ and  $\hat{u}=\alpha$, then $(\varphi_1,\varphi_2^\alpha)$ is a classical solution of system (\ref{HJ_example}).

We have that for $\alpha\in [-1,0)$
$$c_1^*(t,x_1,x_2)> \varphi_1(t,x_1,x_2), \ \ c_1^*(t,x_1,x_2)> \varphi_1^\alpha(t,x_1,x_2).$$
\end{proof}

\subsection{Problem of Continuous Value Function Existence}
The continuous function $(c_1,c_2)$ satisfying conditions (F\ref{cond_boundary})--(F\ref{cond_together_stable}) does not exist in the general case.

\begin{example}
Let the dynamics of the system be given by $$\dot{x}=u, \ \ t\in [0,1], x\in\mathbb{R}, u\in [-1,1].$$ The purpose of the first player is to maximize  $|x(1)|$. The second player is fictitious, and his purpose is to maximize  $x(1)$. In this case there is no continuous function satisfying conditions (F1)--(F4).
\end{example}
\begin{proof}
Let a function $(c_1,c_2):[0,1]\times\mathbb{R}\rightarrow \mathbb{R}^2$ satisfy conditions (F\ref{cond_boundary})--(F\ref{cond_together_stable}). Condition (F\ref{cond_v_stable}) means that
\begin{equation}\label{c_1_upper_ex}
c_1(t,x)\geq c_1\left(t_+,x+\int_{t}^{t_+}u(\theta)d\theta\right)
\end{equation}
 for any $u\in\mathcal{U}$, $t_+\in [t,1]$. In particular,
$c_1(t,x)\geq |x|+(1-t)$. Condition  (F\ref{cond_together_stable}) means that there exists a control  $u_*$ such that
\begin{equation}\label{c_i_example}
c_1(t,x)=\left|x+\int_{t}^1u_*(\tau)d\tau\right|, \ \ c_2(t,x)=x+\int_{t}^1u_*(\tau)d\tau.
\end{equation}
This yields the inequality  $$c_1(t,x)\leq \max_{u\in [-1,1]}|x+u(1-t)|=|x|+(1-t).$$ From this, and (\ref{c_1_upper_ex}) it follows that $c_1(t,x)=|x|+(1-t)$. Moreover, $u_*(\cdot)\equiv 1$ for $x\geq 0$, and $u_*(\cdot)\equiv -1$ for $x\leq 0$. Hence, $c_2(t,x)=x+(1-t)$ for $x>0$ and $c_2(t,x)=x-(1-t)$ for $x<0$.
\end{proof}

The example shows that we need to modify Theorem \ref{th_univ_model_cont} for the case of discontinuous value functions.

\section{Multivalued Value Functions}
\begin{theorem}\label{th_univ_model_mani} Assume that there exists  an upper semicontinuous multivalued function ${S:\ir\rightrightarrows \mathbb{R}^2}$ with nonempty images satisfying the following conditions:
\begin{list}{\rm (S\arabic{tmp})}{\usecounter{tmp}}
\item\label{cond_boundary_m} $S(T,x)=\{(\sigma_1(x),\sigma_2(x))\}$, $x\in\mathbb{R}^n$;
\item\label{cond_v_stable_m} for all $(t,x)\in \ir$, $(J_1,J_2)\in S(t,x)$, $u\in P$ and $t_+\in [t,T]$ there exist a motion $y^2(\cdot)\in {\rm Sol}^2(t,x;u)$ and a pair $(J_1',J_2')\in S(t_+,y^2(t_+))$ such that $J_1\geq J_1'$;
\item\label{cond_minu_stable_m} for all $(t,x)\in \ir$, $(J_1,J_2)\in S(t,x)$, $v\in Q$ and $t_+\in [t,T]$ there exist a motion $y^1(\cdot)\in {\rm Sol}^1(t,x;u)$ and a pair $(J_1'',J_2'')\in S(t_+,y^1(t_+))$ such that $J_2\geq J_2''$;
\item\label{cond_together_stable_m} for all $(t,x)\in \ir$, $(J_1,J_2)\in S(t,x)$ and $t_+\in [t,T]$ there exists a motion $y^{(c)}(\cdot)\in {\rm Sol}(t_*,x_*)$ such that $(J_1,J_2)\in S(t_+,y^{(c)}(t_+))$.
\end{list}
Then for any selector $(\hat{J}_1,\hat{J}_2)$ of the multivalued function $S$ and a compact set $G\subset\ir$ there exists a Control with Guide Nash equilibrium on $G$ such that corresponding Nash equilibrium payoff at $(t_0,x_0)\in G$ is $(\hat{J}_1(t_0,x_0),\hat{J}_2(t_0,x_0))\in S(t_0,x_0)$.
\end{theorem}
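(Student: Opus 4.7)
The plan is to mimic the construction from Theorem~\ref{th_univ_model_cont} with each guide augmented by a running ``target payoff pair'' in $S$ at the current guide position. Fix a selector $(\hat{J}_1,\hat{J}_2)$ of $S$, a compact $G\subset\ir$, and an initial position $(t_0,x_0)\in G$. The $i$-th player's guide becomes the tuple $w^i=(d^i,\tau^i,w^{i,(a)},w^{i,(c)},I^{i,(a)},I^{i,(c)})$, with $I^{i,(a)}\in S(\tau^i,w^{i,(a)})$, $I^{i,(c)}\in S(\tau^i,w^{i,(c)})$, initialized by $w^{i,(a)}=w^{i,(c)}=x_0$ and $I^{i,(a)}=I^{i,(c)}=(\hat{J}_1(t_0,x_0),\hat{J}_2(t_0,x_0))\in S(t_0,x_0)$. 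The selection of the active branch $z^i$ by~\eqref{z_def} and of the extremal controls by~\eqref{u_star_choice}--\eqref{u_up_star_choice} is left unchanged, so Lemmas~\ref{lm_krasovskii}, \ref{lm_x_w_dist}, \ref{lm_x_zero_dis} and the Gronwall-type estimate~\eqref{kras_subb_lemma} apply verbatim.

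The target updates replace conditions~(F2)--(F4) by~(S\ref{cond_v_stable_m})--(S\ref{cond_together_stable_m}). Writing $I^i_k$ for the target attached to the active branch at step $k$, the consistent update applies~(S\ref{cond_together_stable_m}) at $(t_k,z^i_k,I^i_k)$ to obtain $w^{i,(c)}_{k+1}=y^{(c)}(t_{k+1})$ with $I^i_k\in S(t_{k+1},w^{i,(c)}_{k+1})$, and I set $I^{i,(c)}_{k+1}:=I^i_k$. The punishment branch of player~I uses~(S\ref{cond_minu_stable_m}) with $v=v^*$, producing $y^1$ and a pair $I^{1,(a)}_{k+1}\in S(t_{k+1},w^{1,(a)}_{k+1})$ with $(I^{1,(a)}_{k+1})_2\le(I^1_k)_2$; player~II's punishment branch uses~(S\ref{cond_v_stable_m}) symmetrically. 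Because both branches are always advanced from the same point $z^i_k$ with the same valid starting target $I^i_k$, a switch of the active branch at the next step can only replace $I^i_k$ by a pair whose relevant component is no larger; hence $(I^1_k)_2$ is nonincreasing in $k$ starting from $\hat{J}_2(t_0,x_0)$, and $(I^2_k)_1$ is nonincreasing starting from $\hat{J}_1(t_0,x_0)$.

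Three estimates then complete the argument, paralleling~\eqref{sigma_c}, \eqref{sigma_c_deviate}, and~\eqref{c_2_deviate}. When both players use $(U^*,V^*)$, Lemma~\ref{lm_x_w_dist} yields $z^1_k=z^2_k=w^{i,(c)}_k$ throughout and $\|x_r-w^{i,(c)}_r\|\to 0$; the target stays constantly equal to $(\hat{J}_1(t_0,x_0),\hat{J}_2(t_0,x_0))$, and~(S\ref{cond_boundary_m}) collapses $S(T,w^{i,(c)}_r)$ to the singleton $\{(\sigma_1(w^{i,(c)}_r),\sigma_2(w^{i,(c)}_r))\}$, forcing $\sigma_i(w^{i,(c)}_r)=\hat{J}_i(t_0,x_0)$ and the convergence of the realized payoff to $\hat{J}_i(t_0,x_0)$. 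If player~II deviates, the tracking estimate~\eqref{x_z_estima} still yields $\|x_r-z^1_r\|\to 0$; the monotonicity of $(I^1_k)_2$ combined with~(S\ref{cond_boundary_m}) at $t=T$ gives $\sigma_2(z^1_r)=(I^1_r)_2\le\hat{J}_2(t_0,x_0)$, and continuity of $\sigma_2$ bounds $\sigma_2(x_r)$ above by $\hat{J}_2(t_0,x_0)+o(1)$. A symmetric argument handles a deviation by player~I.

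The only delicate point is the target book-keeping at a switch of the active branch between $(c)$ and $(a)$: the reinitialization at such a switch is legitimate because both guide copies at step $k+1$ are always advanced from the common pair $(z^i_k,I^i_k)$ with $I^i_k\in S(t_k,z^i_k)$, and the required monotonicity of the relevant component of $I^i_k$ then follows by applying~(S\ref{cond_v_stable_m})--(S\ref{cond_minu_stable_m}) to the appropriate punishment copy at every step, irrespective of which copy is currently selected by~\eqref{z_def}. Once this is in place, upper semicontinuity of $S$ enters only through~(S\ref{cond_boundary_m}) at $t=T$, and the remaining analysis reduces to the routine estimates~\eqref{kras_subb_lemma}--\eqref{sigma_cont_mod} already used in Section~\ref{sec_cont}.
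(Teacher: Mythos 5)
Your proposal is correct and follows essentially the same route as the paper: the guide is augmented with expected payoffs lying in $S$ along the active branch, the extremal-shift lemmas and estimate (\ref{kras_subb_lemma}) are reused verbatim, monotonicity of the relevant payoff component is propagated through conditions (S\ref{cond_v_stable_m})--(S\ref{cond_together_stable_m}), and (S\ref{cond_boundary_m}) collapses $S$ to a singleton at $t=T$. The only (harmless) difference is bookkeeping: you store a target pair for each of the two guide branches and carry forward the pair supplied by (S\ref{cond_v_stable_m})/(S\ref{cond_minu_stable_m}), whereas the paper keeps a single target and, upon switching to the punishment branch, re-selects it as the minimizer of the relevant component over $S$ at the punishment point --- both devices yield the same invariant and the same monotonicity.
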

\begin{remark}Let $U_*$, $V_*$ be Nash equilibrium strategies constructed for the  compact $G\subset\ir$ and the selector $(\hat{J}_1,\hat{J}_2)$. The value of $(\hat{J}_1,\hat{J}_2)$ may vary along the Nash trajectory $x_*^c[\cdot]$, that is a limit of step-by-step  motions generated by $U_*$ and $V_*$. However, it follows from Theorem \ref{th_univ_model_mani} that for any intermediate time instant $\theta$ there exists a pair of Nash equilibrium strategies such that the corresponding Nash equilibrium payoff at $(\theta,x^c_*[\theta])$ is equal to the value of $(\hat{J}_1,\hat{J}_2)$ at the initial position.

Analogously, if $x_*^1[\cdot]$ is a limit of step-by-by step motions generated by strategy of player I $U_*$, and a control of player II $v[\cdot]$, then for any intermediate time instant $\theta$ there exists a pair of Nash equilibrium strategies such that the corresponding Nash equilibrium payoff at $(\theta,x^1_*[\theta])$ of the player II doesn't exceed the value of the function $\hat{J}_2$ at the initial position.
\end{remark}
\vspace{2pt}
\noindent\textit{Proof of Theorem \ref{th_univ_model_mani}}
To prove the theorem we modify the construction of the guide proposed in the proof of Theorem \ref{th_univ_model_cont}. We assume that the guide consists of the following components: $d\in\mathbb{R}$ is an accumulated error, $\tau\in \mathbb{R}$ is a previous time  of correction, $w^{(a)}$ is a punishment part of the guide, $w^{(c)}$ is a consistent part of the guide,  $ Y_1\in \mathbb{R}$, $ Y_2\in \mathbb{R}$ are expected payoffs of the players.

Let $(t,x)\in\ir$ be a position,  $t_+>t$, $(J_1,J_2)\in S(t,x)$, $u\in P$, $v\in Q$. Let a motion $y^2(\cdot)$ satisfy condition (S\ref{cond_v_stable_m}). Denote $b^2(t_+,t,x,J_1,J_2,u):= y^2(t_+)$. Analogously let $y^1(\cdot)$ satisfy condition (S\ref{cond_minu_stable_m}). Put $b^1(t_+,t,x,J_1,J_2,v):= y^1(t_+)$. Also, if $y^{(c)}(\cdot)$ satisfies condition (S\ref{cond_together_stable_m}), then denote $b^c(t_+,t,x,J_1,J_2):= y^{(c)}(t_+).$ 

First let us define the functions   $$\chi_1(t,x)=\chi_2(t,x):=(d_0,\tau_0,w^{(c)}_0,w^{(a)}_0, Y_{1,0}, Y_{2,0})$$ by the following rule: $d_0:= 0$, $\tau_0:= t$, $w^{(c)}_0=w^{(a)}_0:= x$, $ Y_{1,0}:= \hat{J}_1(t_0,x_0)$, $ Y_{2,0}:= \hat{J}_2(t_0,x_0)$.

Now we shall define controls and transitional functions of the guides.  Let   $t$ be a time instant. Assume that at time  $t$ the state of the system is  $x$, and the state of the $i$-th player's guide is  $w^i=(d^i,\tau^i,w^{(a),i},w^{(c),i}, Y_1^i, Y_2^i)$. Define $z^i$ by rule (\ref{z_def}).
Now let us consider the case of the first player. Put
$$
( Y_{1,+}^1, Y_{2,+}^1):=\left\{
\begin{array}{cc}
  ( Y_1^i, Y_2^i), & z^1=w^{(c),1} \\
  ( Y_1'', Y_2''), & z^1=w^{(a),1}.
\end{array}\right.
$$
 Here $( Y_1'', Y_2'')$ is an element of  $S(t,w^{(a),1})$ such that $ Y_2''=\min\{J_2:(J_1,J_2)\in S(t,w^{(a),1})\}$.
Choose $u_*$ by rule (\ref{u_star_choice}), and $v^*$ by (\ref{v_up_star_choice}). As above, put $u(t,x,w):= u_*$, also set $\psi_1(t_+,t,x,w^1):= (d^1_+,\tau^1_+,w^{(a),1}_+,w^{(c),1}_+, Y_{1,+}^1, Y_{2,+}^1)$ where
\begin{equation*}
\begin{split}
d_+^1=\|z^1-x\|^2, \ \  \tau_+^1=t, \ \  &w^{(a),1}_+=b_1(t_+,t,z^1, Y_{1,+}^1, Y_{2,+}^1,v_*),\\  &w^{(c),1}_+=b_c(t_+,t,z^1, Y_{1,+}^1, Y_{2,+}^1).
\end{split}
\end{equation*}

The case of the second player is considered in the same way.
Put
$$( Y_{1,+}^2, Y_{2,+}^2):=\left\{
\begin{array}{cc}
  ( Y_1^i, Y_2^i), & z^2=w^{(c),2} \\
  ( Y_1', Y_2'), & z^2=w^{(a),2}.
\end{array}\right.
$$ Here  $( Y_1', Y_2')$ is an element of $S(t_+,w^{(a),2})$ such that $ Y_1'=\min\{J_1:(J_1,J_2)\in S(t,w^{(a),2})\}$.
Let $v_*$ satisfy condition (\ref{v_star_choice}). Also, let $u^*$ satisfy condition (\ref{u_up_star_choice}). Put $v(t,x,w):= v_*$. Further, set $\psi_2(t_+,t,x,w^2):= (d^2_+,\tau^2_+,w^{(a),1}_+,w^{(c),2}_+, Y_{1,+}^2, Y_{2,+}^2)$ where
\begin{equation*}
\begin{split}
d_+^2=\|z^2-x\|^2, \ \ \tau_+=t,\ \  &w^{(a),2}_+=b_2(t_+,t,z^2, Y_{1,+}^2, Y_{2,+}^2,v_*),\\  &w^{(c),2}_+=b_c(t_+,t,z^2, Y_{1,+}^2, Y_{2,+}^2).
\end{split}
\end{equation*}

Let us show that for any position  $(t_0,x_0)\in G$ the following equality is fulfilled
\begin{equation}\label{J_hat}
    \hat{J}_i=\lim_{\delta\downarrow 0}\inf\{\sigma_i(x^{(c)}[T,t_0,x_0,U^*,V^*,\Delta]), d(\Delta)\leq\delta\}, \ \ i=1,2.
\end{equation}

Let $\Delta=\{t_k\}_{k=0}^r$ be a partition of  $[t_0,T]$, $d(\Delta)\leq\delta$, $x^c[\cdot]:= x^c[\cdot,t_*,x_*,U^*,V^*,\Delta]$. Extend the partition  $\Delta$ by adding the element   $t_{r+1}=t_r=T$. Denote $x_k:= x^c[t_k]$. Let us denote the state of the $i$-th player's guide at time  $t_k$  by $w^i_k=(d^i_k,w^{(a),i}_k,w^{(c),i}_k, Y_{1,k}^i, Y_{2,k}^i)$. Let  $z^i_k$ be a position chosen by rule (\ref{z_def}) for the $i$-th  player at time  $t_k$.

It follows from lemma \ref{lm_x_w_dist} that the point $z^i_k$ is equal to $w^{(c),i}_k$. In addition, $w_k^{(c),1}=w_k^{(c),2}$, and the following inequality is valid:
  $$
  \|x_k-w^{(c),i}_k\|\leq \|x_{k-1}-z_{k-1}^i\|^2(1+\beta(t_k-\tau_{k-1}))+\varphi(t_k-\tau_{k-1})(t_k-\tau_{k-1}).
$$
Applying this inequality sequentially and using the equality $z_0^i=x_0$ we get estimate  (\ref{x_z_estima}) for $i=1,2$. Further, estimate (\ref{sigma_cont_mod}) holds for $i=1,2$, $j=1,2$. The choice of  $z_k^i$ yields that $( Y_{1,k}^i, Y_{2,k}^i)=( Y_{1,k-1}^i, Y_{2,k-1}^i)$, and $(Y_{1,k}^i, Y_{2,k}^i)\in S(t_{k-1},z_{k-1}^1)$ for $k=\overline{1,r+1}$. Also, the construction of the function $\chi_i$ leads to the equality $( Y_{1,0}^i, Y_{2,0}^i)=(\hat{J}_1(t_0,x_0),\hat{J}_2(t_0,x_0))$. Hence, $(\hat{J}_1(t_0,x_0),\hat{J}_2(t_0,x_0))\in S(t_r,z_r^i)=\{(\sigma_1(z_r^i),\sigma_2(z_r^i))\}$.
By (\ref{sigma_cont_mod}) we conclude that equality (\ref{J_hat}) holds.

Now let us prove that for any position $(t_0,x_0)\in G$ the following inequality is fulfilled:
\begin{equation}\label{J_2_hat_exceed}
    \hat{J}_2(t_0,x_0)\\\geq \lim_{\delta\downarrow 0}\sup\{\sigma_2(x^{1}[T,t_0,x_0,U^*,\Delta,v[\cdot]]), d(\Delta)\leq\delta, v[\cdot]\in \mathcal{V}\}.
\end{equation}

As above, let  $\Delta=\{t_k\}_{k=0}^r$ be a partition of the interval $[t_0,T]$, $d(\Delta)\leq\delta$, $x^1[\cdot]=x^1[\cdot,t_0,x_0,U^*,\Delta,v[\cdot]]$. We add the element  $t_{r+1}=t_r=T$ to the partition $\Delta$. Denote $x_k:= x^1[t_k]$. Let us denote the state of the first player's guide at  time   $t_k$  by $w^1_k=(d^1_k,w^{(a),1}_k,w^{(c),1}_k, Y_{1,k}^1, Y_{2,k}^1)$. Further, let  $z^1_k$ be a point chosen by rule (\ref{z_def}) for the first player at time  $t_k$.

The choice of $z_k^1$ (see (\ref{z_def})) and lemma \ref{lm_x_zero_dis} yield the inequality
$$
\|x_k-z_k^1\|^2\leq \|x_{k-1}-z_{k-1}^1\|^2(1+\beta(t_k-t_{k-1}))+\varphi(t_k-t_{k-1})(t_k-t_{k-1}).
$$

Applying this inequality sequentially and using the equality $z_0^1=x_0$  we get estimate (\ref{x_z_estima}) for $i=1$. Therefore, inequality  (\ref{sigma_cont_mod}) is fulfilled for $i=1$, $j=2$. In addition, $ Y_{2,k}^1\geq  Y_{2,k-1}^2$. Indeed, if $z_k^1=w^{(c),1}_k$, then $( Y_{1,k}^1, Y_{2,k}^1)=( Y_{1,k-1}^1, Y_{2,k-1}^1)$. If  $z_k^1=w^{(a),1}_k$, we have that an element $( Y_{1,k}^1, Y_{2,k}^1)$ is chosen so that $ Y_{2,k}^1$ is the minimum of $\{J_2:(J_1,J_2)\in S(t_{k-1},z_{k-1}^1)\}$.

By the construction we have $( Y_{1,k}^1, Y_{1,k}^1)\in S(t_{k-1},z_{k-1}^1)$. Hence, using condition (S\ref{cond_boundary_m}) we obtain that
\begin{equation}\label{sigma_J_ineq}
    \hat{J}_2(t_0,x_0)\geq  Y_{2,r+1}^1=\sigma_2(z_r^1).
\end{equation} Since inequality   (\ref{sigma_cont_mod}) is valid for $i=1$, $j=2$, estimate (\ref{sigma_J_ineq}) yields inequality~(\ref{J_2_hat_exceed}).

Analogously, we get that for any position $(t_0,x_0)\in G$ the  inequality
\begin{equation}\label{J_1_hat_exceed}
    \hat{J}_1(t_0,x_0)\geq \lim_{\delta\downarrow 0}\sup\{\sigma_1(x^{2}[T,t_0,x_0,V^*,\Delta,u[\cdot]]), d(\Delta)\leq\delta, u[\cdot]\in\mathcal{U}\}
\end{equation} is fulfilled.

Equality (\ref{J_hat}) and inequalities  (\ref{J_2_hat_exceed}), (\ref{J_1_hat_exceed}) mean that the pair of strategies $U^*$ and $V^*$ is a Nash equilibrium on $G$. Moreover, the Nash equilibrium payoff at the initial position $(t_0,x_0)\in G$ is equal to $(\hat{J}_1(t_0,x_0),\hat{J}_2(t_0,x_0))$.

\qed

\section{Existence of Multivalued Value Function}\label{sect_existence}
\subsection{Discrete Time Game}\label{sect_discrete}
In order to prove the existence of a multivalued function satisfying conditions (S\ref{cond_boundary_m})--(S\ref{cond_together_stable_m}) we consider the auxiliary discrete time dynamical game. Let $N$ be a natural number, and let $\delta^N:= T/N$ be a time step. We discretize $[0,T]$ by means of the uniform grid $\Delta^N:=\{t_k^N\}_{k=0}^N$ with $t_k^N=k\delta^N$.

Consider the discrete time control system
\begin{equation}\label{sys_discrete}\begin{split}
    \xi^N(t_{k+1}^N)=\xi^N(t_k)+&\delta^N[f(t_k^N,\xi^N(t_k^N),u(t_k^N))+ g(t_k^N,\xi^N(t_k^N),v(t_k^N))],\\  &k=\overline{0,N-1}, \ \ u(t_k^N)\in P,\ \ v(t_k^N)\in Q.
\end{split}\end{equation}

Denote
$$\mathcal{U}^N:= \{u:[0,T]\rightarrow P: u(t)=u_k^N\in P\mbox{ for }t\in [t_k^N,t_{k+1}^N[\hspace{1pt}\}, $$
$$\mathcal{V}^N:= \{v:[0,T]\rightarrow Q: v(t)=v_k^N\in Q\mbox{ for }t\in [t_k^N,t_{k+1}^N[\hspace{1pt}\}. $$

For $t_*\in \Delta^N$, $\xi_*\in\mathbb{R}^n$, $u\in \mathcal{U}^N$, and $v\in \mathcal{V}^N$ let  $\xi^N(\cdot,t_*,\xi_*,u,v):\Delta^N\cap [t_*,T]\rightarrow\mathbb{R}^n$ be a solution of initial value problem (\ref{sys_discrete}), $\xi^N(t_*)=\xi_*$.

First, we shall estimate  $\|\xi^N(t_+,t_*,\xi_*,u,v)-x(t_+,t_*,x_*,u,v)\|$.

Let $G\subset\ir$ be a compact of initial positions. Let $E'\subset \mathbb{R}^n$ be a compact such that $x(t,t_*,x_*,u,v)\in E'$, and $\xi^N(t,t_*,x_*,u,v)\in E'$ for all natural $N$, $(t_*,x_*)\in G$, $t,t_*\in \Delta^N$, $u\in\mathcal{U}^N$, $v\in\mathcal{V}^N$. Set $$K':=\max\{\|f(t,x,u)+g(t,x,v)\|:t\in [0,T],x\in E',\ \ u\in P,\ \ v\in Q\}.$$ Denote by $L'$ the Lipschitz constant of the function $f+g$ on $[0,T]\times E'\times P\times Q$: for all $t\in [0,T]$, $x',x''\in E',$ $u\in P$, $v\in Q$ $$\|f(t,x',u)+g(t,x',v)-f(t,x'',u)-g(t,x'',v)\|\leq L'\|x'-x''\|.$$
Further, set
\begin{equation*}\begin{split}
\varphi'(\delta):= \sup\{\|&f(t',x',u)-f(t'',x'',u)\|+\|g(t',x',v)-g(t'',x'',v)\|:\\
t',t''\in [&0,T],x',x''\in E',\ \ |t'-t'|\leq\delta,
\|x'-x''\|\leq K'\delta,\ \ u\in P,\ \ v\in Q\}.\end{split}
\end{equation*}

\begin{lemma}\label{lm_x_y_distance} If $t_*,t_+\in \Delta^N$, $t_+\geq t_*$, $(t_*,x_*),(t_*,\xi_*)\in G$, $u\in\mathcal{U}^N$, and $v\in\mathcal{V}^N$, then,
\begin{equation}\label{x_y_distance}\begin{split}
\|x(&t_+,t_*,x_*,u,v)-\xi^N(t_+,t_*,\xi_*,u,v)\|\\
&\leq \|x_*-\xi_*\|\exp(2L'(t_+-t_*))+\varphi'(\delta^N)\exp(L'(t_+-t_*)).\end{split} \end{equation}

\end{lemma}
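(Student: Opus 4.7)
The plan is a standard Euler-vs-continuous comparison using a discrete Gronwall-type argument. Since $u \in \mathcal{U}^N$ and $v \in \mathcal{V}^N$ are piecewise constant on the grid $\Delta^N$, both $x(\cdot)$ and $\xi^N(\cdot)$ use the same control pair $(u_k,v_k)$ on each subinterval $[t_k^N, t_{k+1}^N]$. Moreover, the continuous trajectory satisfies $\|x(s) - x(t_k^N)\| \leq K'\delta^N$ on such an interval because $\dot x$ is bounded by $K'$ on $E'$.

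The first step is a one-step error estimate. Writing
\[
x(t_{k+1}^N) - \xi^N(t_{k+1}^N) = x(t_k^N) - \xi^N(t_k^N) + \int_{t_k^N}^{t_{k+1}^N} \Bigl[f(s,x(s),u_k) + g(s,x(s),v_k) - f(t_k^N, \xi^N(t_k^N), u_k) - g(t_k^N, \xi^N(t_k^N), v_k)\Bigr] ds
\]
and adding and subtracting $f(t_k^N, x(t_k^N), u_k) + g(t_k^N, x(t_k^N), v_k)$ inside the integrand, I split the difference into two parts. The part comparing $(s,x(s))$ with $(t_k^N, x(t_k^N))$ is bounded by $\varphi'(\delta^N)$ directly from its definition, since $|s - t_k^N| \leq \delta^N$ and $\|x(s) - x(t_k^N)\| \leq K'\delta^N$. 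The part comparing $(t_k^N, x(t_k^N))$ with $(t_k^N, \xi^N(t_k^N))$ is bounded by $L'\|x(t_k^N) - \xi^N(t_k^N)\|$ by the Lipschitz hypothesis on $f+g$. Integrating over $[t_k^N,t_{k+1}^N]$ gives a total contribution of at most $\delta^N\varphi'(\delta^N) + \delta^N L' \|x(t_k^N) - \xi^N(t_k^N)\|$.

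Setting $a_k := \|x(t_k^N) - \xi^N(t_k^N)\|$, this produces the discrete recurrence
\[
a_{k+1} \leq (1 + L'\delta^N)\, a_k + \delta^N \varphi'(\delta^N),
\]
which I would iterate using $(1 + L'\delta^N)^k \leq e^{L'k\delta^N} = e^{L'(t_k - t_*)}$ to obtain
\[
a_k \leq a_0\, e^{L'(t_k - t_*)} + \varphi'(\delta^N) \cdot \frac{e^{L'(t_k - t_*)} - 1}{L'}.
\]
A crude majorization then yields the stated bound (e.g., $e^{L'(t_+-t_*)}\leq e^{2L'(t_+-t_*)}$ for the first term, and absorbing the geometric-sum prefactor into an additional $e^{L'(t_+-t_*)}$ for the second).

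The main obstacle is essentially bookkeeping; the only mildly delicate point is verifying that on each step the pair $(s,x(s)),(t_k^N,x(t_k^N))$ respects both constraints $|s-t_k^N|\leq \delta^N$ and $\|x(s)-x(t_k^N)\|\leq K'\delta^N$ used in the supremum defining $\varphi'$, which is immediate from the a priori velocity bound $\|f+g\|\leq K'$ on $[0,T]\times E' \times P \times Q$.
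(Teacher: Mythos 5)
Your proof follows the same route as the paper's: a one-step comparison of the exact trajectory with its Euler step (splitting the integrand exactly as you do, with the $(s,x(s))$ versus $(t_k^N,x(t_k^N))$ part bounded by $\varphi'(\delta^N)$ via the velocity bound $K'$, and the Lipschitz part giving the linear term in $\|x(t_k^N)-\xi^N(t_k^N)\|$), followed by iteration of the resulting recurrence. The only differences are bookkeeping: the paper's recurrence carries $2L'$ rather than $L'$ (whence the $\exp(2L'(t_+-t_*))$ in the first term of the statement), and your final absorption of the geometric-sum factor $(e^{L'(t_+-t_*)}-1)/L'$ into $e^{L'(t_+-t_*)}$ is not literally valid when $L'$ is small and $t_+-t_*$ is large --- but the paper's own passage from its recurrence to the stated bound has exactly the same looseness, and downstream only the fact that the second term is $O(\varphi'(\delta^N))$ as $N\to\infty$ is ever used.
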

\begin{proof}
Let $m$ and $r$ be natural numbers such that $t_*=t_m^N$, $t_+=t_r^N$.
Denote $x(\cdot):= x(\cdot,t_*,x_*,u,v)$, $x_k:= x(t_k^N,t_*,x_*,u,v)$, $\xi_k:= \xi^N(t_k^N,t_*,\xi_*,u,v)$.
We have that
\begin{equation*}\begin{split}
x_{k+1}=&x_k+\int_{t_{k}^N}^{t_{k+1}^N}[f(t,x(t),u_k)+g(t,x(t),v_k)]d t\\
=&x_k+\delta^N[f(t_k^N,x_k,u_k)+g(t_k^Nx_k,v_k)]\\
&+\int_{t_k^N}^{t_{k+1}^N}[f(t,x(t),u_k)+g(t,x(t),v_k)-f(t_k^N,x_k,u_k)-g(t_k^N,x_k,v_k)]d t.
\end{split}
\end{equation*}
Here $u_k$ and $v_k$ denote the values of $u$ and $v$ on $[t_k^N,t_{k+1}^N[$ respectively.

Further,
$$\|x(t)-x_k\|\leq K'(t-t_k), \ \ t\in [t_{k},t_{k+1}]. $$ Therefore, the following inequality is fulfilled:
\begin{equation*}
    \int_{t_k}^{t_{k+1}}[f(t,x(t),u_k)+g(t,x(t),v_k)-f(t_k^N,x_k,u_k)-g(t_k^N,x_k,v_k)]dt
    \leq
    \delta^N \varphi(\delta^N).
\end{equation*}
Hence,
\begin{equation}\label{x_k_p_via_x_k}
\|x_{k+1}-x_k-\delta^N[f(t_k^N,x_k,u_k)+g(t_k^N,x_k,v_k)]\|\leq \delta^N \varphi(\delta^N).
\end{equation}

Further, we have
\begin{equation*}\begin{split}
&x_k+\delta^N[f(t_k^N,x_k,u_k)+g(t_k^N,x_k,v_k)]-\xi_{k+1}\\
&=x_k-\xi_k+\delta^N[f(t_k^N,x_k,u_k)+g(t_k^N,x_k,v_k)- f(t_k^N,\xi_k,u_k)-g(t_k^N,\xi_k,v_k)].\end{split}  \end{equation*}
Consequently,
$$\|x_k+\delta^N[f(t_k^N,x_k,u_k)+g(t_k^N,x_k,v_k)]-\xi_{k+1}\|\leq \|x_k-\xi_k\|+\delta^N 2L'\|x_k-\xi_k\|.  $$
This inequality and estimate  (\ref{x_k_p_via_x_k}) yield that
$$\|x_{k+1}-\xi_{k+1}\|\leq \|x_k-\xi_k\|+\delta^N 2L\|x_k-\xi_k\|+\delta^N\varphi(\delta^N). $$
Applying the last inequality sequentially we get inequality (\ref{x_y_distance}).

\end{proof}

Now let us proof the existence of a function satisfying discrete time analogs of conditions  (S1)--(S4).
\begin{theorem}\label{th_disrcrete}
For any natural $N$ there exists an upper semicontinuous multivalued function  $Z^N:\dnr\rightrightarrows\mathbb{R}^2$ satisfying the following properties
\begin{enumerate}
  \item $Z^N(T,\xi)=\{(\sigma_1(\xi),\sigma_2(\xi))\}$;
  \item for all  $(t_*,\xi_*)\in \dnr$, $u\in  P$, $( Y_1, Y_2)\in Z^N(t_*,\xi_*)$ and $t_+\in\Delta^N$, $t_+>t_*$ there exist a control $v\in \mathcal{V}^N$ and a pair $( Y'_1, Y'_2)\in Z^N(t_+,\xi^N(t_+,t_*,\xi_*,u,v))$ such that $ Y_1\geq  Y_1'$;
  \item for all $(t_*,\xi_*)\in \dnr$, $v\in  Q$, $( Y_1, Y_2)\in Z^N(t_*,\xi_*)$ and $t_+\in\Delta^N$, $t_+>t_*$ there exist a control $u\in \mathcal{V}^N$ and a pair $( Y''_1, Y''_2)\in Z^N(t_+,\xi^N(t_+,t_*,\xi_*,u,v))$ such that $ Y_2\geq  Y_2''$;
  \item for all  $(t_*,\xi_*)\in \dnr$, $( Y_1, Y_2)\in Z^N(t_*,\xi_*)$ and $t_+\in\Delta^N$, $t_+>t_*$ there exist controls $u\in \mathcal{U}^N$ and $v\in  \mathcal{V}^N$ such that  $( Y_1, Y_2)\in Z^N(t_+,\xi^N(t_+,t_*,\xi_*,u,v))$.
\end{enumerate}
\end{theorem}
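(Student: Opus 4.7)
The plan is a backward induction on $k=N,N-1,\ldots,0$. I set $Z^N(T,\xi):=\{(\sigma_1(\xi),\sigma_2(\xi))\}$, and at each earlier step I define $Z^N(t_k^N,\xi)$ to be the set of pairs $(J_1,J_2)$ satisfying the \emph{one-step} analogues of properties 2, 3 and 4 (with $t_+ = t_{k+1}^N$ and the already-constructed $Z^N(t_{k+1}^N,\cdot)$ in the role of the unknown). The full multi-step versions of 2 and 3 then follow by iterating on the number of nodes between $t_*$ and $t_+$: a constant deviation $u\in P$ of player I together with the one-step clause produces $v_k\in Q$ and a pair $(Y_1^{(k+1)},Y_2^{(k+1)})\in Z^N(t_{k+1}^N,\cdot)$ with $Y_1^{(k+1)}\leq Y_1$; re-applying the same clause at $t_{k+1}^N$ with the same $u$, and so on, produces a piecewise-constant control $v\in\mathcal{V}^N$ and a final pair in $Z^N(t_+,\cdot)$ with $Y_1'\leq Y_1$. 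Conditions 3 and 4 are iterated identically.

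The substantive burden is the inductive step: assuming $Z^N(t_{k+1}^N,\cdot)$ is upper semicontinuous with nonempty compact values, prove the same for $Z^N(t_k^N,\cdot)$. To encode the one-step punishment clauses I introduce the threat functions
\begin{equation*}
T_1(t_k^N,\xi):=\sup_{u\in P}\inf_{v\in Q}L_1(u,v,\xi),\qquad T_2(t_k^N,\xi):=\sup_{v\in Q}\inf_{u\in P}L_2(u,v,\xi),
\end{equation*}
where $L_i(u,v,\xi):=\min\{J_i:(J_1,J_2)\in Z^N(t_{k+1}^N,\xi+\delta^N[f(t_k^N,\xi,u)+g(t_k^N,\xi,v)])\}$; the minimum is attained by compactness of $Z^N(t_{k+1}^N,\cdot)$, and $L_i$ is lower semicontinuous in $(u,v,\xi)$ from USC of $Z^N(t_{k+1}^N,\cdot)$ together with continuity of $f,g$. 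The one-step versions of 2 and 3 then read precisely $J_1\geq T_1(t_k^N,\xi)$ and $J_2\geq T_2(t_k^N,\xi)$.

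The hard part will be nonemptiness, which I would resolve by a \emph{cross-play} argument combining the optimal strategies of the two separate max-min problems. Pick $\varepsilon$-optimal choices $u^{*,1}_\varepsilon\in P$ in the sup defining $T_1$ and $v^{*,2}_\varepsilon\in Q$ in the sup defining $T_2$, and let $\xi^{**}_\varepsilon:=\xi+\delta^N[f(t_k^N,\xi,u^{*,1}_\varepsilon)+g(t_k^N,\xi,v^{*,2}_\varepsilon)]$. The choice of $u^{*,1}_\varepsilon$ forces $L_1(u^{*,1}_\varepsilon,v,\xi)\geq T_1(t_k^N,\xi)-\varepsilon$ for every $v\in Q$, and symmetrically $L_2(u,v^{*,2}_\varepsilon,\xi)\geq T_2(t_k^N,\xi)-\varepsilon$ for every $u\in P$; consequently every element $(J_1^\varepsilon,J_2^\varepsilon)$ of the inductively nonempty set $Z^N(t_{k+1}^N,\xi^{**}_\varepsilon)$ satisfies $J_i^\varepsilon\geq T_i(t_k^N,\xi)-\varepsilon$. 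Compactness of $P\times Q$, boundedness of the reachable set, and USC of $Z^N(t_{k+1}^N,\cdot)$ yield a convergent subsequence as $\varepsilon\downarrow 0$ whose limit $(J_1,J_2)$ lies in $Z^N(t_{k+1}^N,\xi^{**})$ and satisfies $J_i\geq T_i(t_k^N,\xi)$, i.e.\ lies in $Z^N(t_k^N,\xi)$.

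To close the induction I still need compact values and USC at $t_k^N$. Clause (a) (the one-step version of property 4) exhibits the ``achievability'' set $\bigcup_{u\in P,v\in Q}Z^N(t_{k+1}^N,\xi+\delta^N[f+g])$ as the image of USC over the compact $P\times Q$, hence closed; intersecting it with the closed half-planes $J_i\geq T_i(t_k^N,\xi)$, inside the bounded image of the reachable set under $(\sigma_1,\sigma_2)$, gives a compact $Z^N(t_k^N,\xi)$. USC in $\xi$ reduces to lower semicontinuity of $T_i(t_k^N,\cdot)$ (a sup of LSC functions of $\xi$) together with the standard closed-graph extraction along witnessing sequences $(u_n,v_n)$ for (a) using USC of $Z^N(t_{k+1}^N,\cdot)$. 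The main obstacle of the proof is concentrated in the nonemptiness step: one must satisfy both punishment requirements simultaneously, and the cross-play construction is precisely the device that accomplishes this.
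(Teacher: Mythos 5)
Your proposal is correct and follows essentially the same route as the paper: backward induction with $Z^N(t_k^N,\xi)$ defined as the achievability set $W_k(\xi)=\bigcup_{u,v}Z^N(t_{k+1}^N,\cdot)$ intersected with the half-planes $J_i\geq\varrho^i_k(\xi)$ (your threat functions $T_i$), nonemptiness via the cross-play point $\xi(t_{k+1}^N,t_k^N,\xi,u_*,v_*)$, and upper semicontinuity from USC of $W_k$ together with lower semicontinuity of the threat functions. The only cosmetic difference is that you use $\varepsilon$-optimal selections and a compactness limit where the paper takes exact maximizers of the max-min expressions, and that you spell out the iteration reducing the multi-step claims to the one-step case, which the paper states without detail.
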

\begin{proof}
In the proof we fix the  number $N$ and omit the superindex $N$. Denote $$ f_k(z,u):=  \delta f(t_k,z,u), \ \ g_k(z,v):= \delta g(t_k,z,v). $$

The proof is by inverse induction on $k$. For  $k=N$ put $Z(t_N,z):= \{\sigma_1(z),\sigma_2(z)\}$.

Now let  $k\in \overline{0,N-1}$. Assume that the values $Z(t_{k+1},z),\ldots, Z(t_N,z)$ are constructed for all $z\in\mathbb{R}^n$. In addition, suppose that the functions $Z(t_{k+1},\cdot),\ldots, Z(t_N,\cdot)$ are upper semicontinuous. Define $$\varsigma^i_{k+1}(z):=\min\{ Y_i:( Y_1, Y_2)\in Z(t_{k+1},z)\}, \ \ i=1,2.$$
It follows from the upper semicontinuity of the multivalued function $Z(t_{k+1},\cdot)$ that the functions $\varsigma^1_{k+1}$ and $\varsigma^2_{k+1}$ are lower semicontinuity.

Set
$$W_k(z):= \bigcup_{u\in  P,v\in Q}Z(t_{k+1},\xi(t_{k+1},t_k,z,u,v)), $$
\begin{equation}\label{varrho_1_def}
\varrho_k^1(z):=\max_{u\in P}\min_{v\in Q}\varsigma_{k+1}^1(\xi(t_{k+1},t_k,z,u,v)),
\end{equation}
\begin{equation}\label{varrho_2_def}
\varrho_k^2(z):=\max_{v\in Q}\min_{u\in P}\varsigma_{k+1}^2(\xi(t_{k+1},t_k,z,u,v)).
\end{equation}
We claim that the multivalued function  $W_k$ is upper semicontinuous. Indeed, let $z^l\rightarrow z^*$, and let $( Y_1^l, Y_2^l)\in W_k(z^l)$ be such that $( Y^l_1, Y^l_2)\rightarrow ( Y_1^*, Y^*_2)$. We have that $( Y^l_1, Y^l_2)\in Z(t_{k+1},\xi(t_{k+1},t_k,z^l,u^l,v^l))$ for some $u^l\in P$, $v^l\in Q$. We can assume without loss of generality that $(u^l,v^l)\rightarrow (u^*,v^*)$. By the continuity of the functions $f_k$ and $g_k$  we get that $\xi(t_{k+1},t_k,z^l,u^l,v^l)=z^l+f_k(z^l,u^l)+g_k(z^l,v^l)\rightarrow \xi(t_{k+1},t_k,z^*,u^*,v^*)$, as $l\rightarrow\infty$. The upper semicontinuity of the multivalued function $Z(t_{k+1},\cdot)$ yields that $( Y^*_1, Y^*_2)\in Z(t_{k+1},\xi(t_{k+1},t_k,z^*,u^*,v^*))\subset W_k(z^*)$.

Now let us show that the functions $\varrho^i_k$ are lower semicontinuous. We give the proof  only for the case $i=1$. For a fixed  $u\in P$ consider the function $z\mapsto \min_{v\in Q}\varsigma_{k+1}^1(\xi(t_{k+1},t_k,z,u,v))$.  We shall prove that this function is lower semicontinuous, i.e. for any $z^*$ the following inequality holds:
\begin{equation}\label{semi_sigma_u}
    \liminf_{z\rightarrow z^*}\min_{v\in Q}\varsigma_{k+1}^1(\xi(t_{k+1},t_k,z,u,v))\geq \min_{v\in Q}\varsigma_{k+1}^1(\xi(t_{k+1},t_k,z^*,u,v)).
\end{equation}
Let $\{z^l\}_{l=1}^\infty$ be a minimizing sequence:
$$\liminf_{z\rightarrow z^*}\min_{v\in Q}\varsigma_{k+1}^1(\xi(t_{k+1},t_k,z,u,v))=\lim_{l\rightarrow \infty}\min_{v\in Q}\varsigma_{k+1}^1(\xi(t_{k+1},t_k,z^l,u,v)). $$ Let $v^l\in Q$ satisfy the condition $$\varsigma^{k+1}_1(\xi(t_{k+1},t_k,z^l,u,v^l))=\min_{v\in Q} \varsigma_{k+1}^1(\xi(t_{k+1},t_k,z^l,u,v)). $$ Hence we have
 \begin{equation}\label{liminf_repres}
 \liminf_{z\rightarrow z^*}\min_{v\in Q}\varsigma_{k+1}^1(\xi(t_{k+1},t_k,z,u,v))=\lim_{l\rightarrow \infty}\varsigma_{k+1}^1(\xi(t_{k+1},t_k,z^l,u,v^l)).
\end{equation} We can assume without loss of generality that the sequence $\{v^l\}$ converges to a control $v^*\in Q$.
From continuity of the function  $\xi(t_{k+1},t_k,\cdot,u,\cdot)$ and lower semicontinuity of the function $\varsigma_{k+1}^1$ we obtain that
\begin{equation*}
\lim_{l\rightarrow \infty}\varsigma_{k+1}^1(\xi(t_{k+1},t_k,z^l,u,v^l))\geq \varsigma_{k+1}^1(\xi(t_{k+1},t_k,z^*,u,v^*))
\geq \min_{v\in Q}\varsigma_{k+1}^1(\xi(t_{k+1},t_k,z^*,u,v)).
\end{equation*}
This inequality and equality  (\ref{liminf_repres}) lead inequality (\ref{semi_sigma_u}).

Since the functions  $z\mapsto \min_{v\in Q}\varsigma^{k+1}_1(\xi(t_{k+1},t_k,z,u,v))$ are lower semicontinuous for each $u\in P$,  the function
$$\varrho_1^k(z)=\max_{u\in P}\min_{v\in Q}\varsigma^{k+1}_1(\xi(t_{k+1},t_k,z,u,v)) $$ is lower semicontinuous.

Put
\begin{equation}\label{Z_t_k_def}
Z(t_k,z):= \{( Y_1, Y_2)\in W^k(z): Y_i\geq \varrho_k^i(z), \ \ i=1,2\}.
\end{equation}

First, we shall prove that it is nonempty. Let $z\in\mathbb{R}^n$. Let  $u_*$ maximize the right-hand side of  (\ref{varrho_1_def}), and let $v_*$ maximize the right-hand side of (\ref{varrho_2_def}).  Choose $( Y_1, Y_2)\in Z(t_{k+1},\xi(t_{k+1},t_k,z,u_*,v_*))$. We have that $( Y_1, Y_2)\in W_k(z)$. Further,
$$\varrho^i_k(z)\leq \varsigma_{k+1}^i(\xi(t_{k+1},t_k,z,u_*,v_*))\leq  Y_i. $$ Therefore, $( Y_1, Y_2)\in Z(t_k,z)$.

The upper semicontinuity of the function $Z(t_k,\cdot)$ follows from (\ref{Z_t_k_def}), the upper semicontinuity of the  multivalued function $W^k$, and the lower semicontinuity of the function $\varrho_k^i(z)$.

Now let us show that the function $Z$ satisfies condition 1--4 of the theorem.

Note that conditions 1 and 4 are fulfilled by the construction. Prove  conditions  2 and 3. Let $(t_*,\xi_*)\in\dnr$, $t_+\in\Delta^N$, $t_+>t$, $u_*\in P$, $( Y_1, Y_2)\in Z(t_*,\xi_*)$. It suffices to consider the case  $t=t_k$, $t_+=t_{k+1}$. By construction of the function $Z$ we have that $ Y_1\geq \varrho_k^1(\xi_*)$. From the definition of the function $\varrho_k^1$ (see (\ref{varrho_1_def})) it follows that
$$ Y_1\geq\max_{u\in P}\min_{v\in Q}\varsigma_{k+1}^1(\xi(t_{k+1},t_k,\xi_*,u,v))\geq \min_{v\in Q}\varsigma_{k+1}^1(\xi(t_{k+1},t_k,\xi_*,u_*,v)). $$ Let $v_*\in Q$ be a control of player II such that
$$\min_{v\in Q}\varsigma_{k+1}^1(\xi(t_{k+1},t_k,\xi_*,u_*,v))=\varsigma_{k+1}^1(\xi(t_{k+1},t_k,\xi_*,u_*,v_*)). $$ From the definition of the function $\varsigma_{k+1}^1$  we get that there exists a pair $( Y_1', Y_2')\in Z(t_{k+1},\xi(t_{k+1},t_k,\xi_*,u_*,v_*))$ such that $ Y_1'=\varsigma_{k+1}^1(\xi(t_{k+1},t_k,\xi_*,u_*,v_*))$. Consequently, $ Y_1\geq  Y_1'$. Hence, condition 2 holds. Condition 3 is proved analogously.

\end{proof}

\subsection{Continuous Time Dynamics}

\begin{theorem}\label{th_existence}
There exists an upper semicontinuous multivalued function   $S:\ir\rightrightarrows\mathbb{R}^2$ with nonempty images satisfying conditions (S1)--(S4).
\end{theorem}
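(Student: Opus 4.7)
The plan is to construct $S$ as a graph limit of the discrete-time multifunctions $Z^N$ supplied by Theorem \ref{th_disrcrete}. Fix a compact $G\subset\ir$ and let $E'\subset\mathbb{R}^n$ be the compact reachable set from $G$ as in Lemma \ref{lm_x_y_distance}. Set $M:=\max\{|\sigma_i(x)|:x\in E',\ i=1,2\}$. A backward induction on $k$ in the construction (\ref{Z_t_k_def}) shows that every element of $Z^N(t_k^N,\xi)$, for $(t_k^N,\xi)$ in the reachable region, arises by iterating property 4 of Theorem \ref{th_disrcrete} as a terminal payoff pair $(\sigma_1(\xi_N),\sigma_2(\xi_N))$ along some discrete trajectory; in particular $Z^N(t_k^N,\xi)\subset[-M,M]^2$.

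Next I would take a graph limit. Put
\[
\Gamma^N:=\{(t_k^N,\xi,Y_1,Y_2):t_k^N\in\Delta^N,\ \xi\in E',\ (Y_1,Y_2)\in Z^N(t_k^N,\xi)\},
\]
which is a finite union of closed sets and hence closed in the compact box $B:=[0,T]\times E'\times[-M,M]^2$. By compactness of the hyperspace of closed subsets of $B$ under the Hausdorff metric (equivalently Kuratowski--Painlev\'e convergence), a subsequence (still indexed by $N$) converges to a closed $\Gamma\subset B$. Define $S(t,x):=\{(Y_1,Y_2):(t,x,Y_1,Y_2)\in\Gamma\}$ on the region under consideration; exhausting $\ir$ by a growing sequence of compacts $G_k$ and a further diagonal extraction, the limits on the larger regions agree on the smaller ones and assemble into a single multivalued function $S$ defined on all of $\ir$. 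Closedness of $\Gamma$ inside the compact $B$ forces $S$ to have closed graph, equivalently $S$ is upper semicontinuous with closed values. Nonemptiness of every section follows from density of $\Delta^N$ in $[0,T]$: every $(t,x)$ is a limit of $(t_k^N,x)$ with $Z^N(t_k^N,x)\neq\varnothing$, and the uniform bound in $[-M,M]^2$ provides a convergent subsequence whose limit sits in $\Gamma$.

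The verification of (S1)--(S4) is the analytic heart of the proof. (S1) is immediate from property 1 of Theorem \ref{th_disrcrete}. For (S2), fix $(t,x)\in\ir$, $(J_1,J_2)\in S(t,x)$, $u\in P$, and $t_+\in[t,T]$. By definition of $\Gamma$ there exist grid points $t^N\in\Delta^N$, states $x^N$, and payoff pairs $(J_1^N,J_2^N)\in Z^N(t^N,x^N)$ with $(t^N,x^N,J_1^N,J_2^N)\to(t,x,J_1,J_2)$. Choose $t_+^N\in\Delta^N$ with $t_+^N\geq t^N$ and $t_+^N\to t_+$. Iterating property 2 of Theorem \ref{th_disrcrete} along the finitely many grid steps from $t^N$ to $t_+^N$, always with the same constant $u$ and concatenating the one-step $v$'s into a single $v^N\in\mathcal{V}^N$, yields a pair $(J_1'^N,J_2'^N)\in Z^N(t_+^N,\xi^N(t_+^N,t^N,x^N,u,v^N))$ with $J_1^N\geq J_1'^N$. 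By Lemma \ref{lm_x_y_distance},
\[
\|\xi^N(t_+^N,t^N,x^N,u,v^N)-x(t_+^N,t^N,x^N,u,v^N)\|\to 0.
\]
The continuous trajectories $x(\cdot,t^N,x^N,u,v^N)$ are uniformly Lipschitz with constant $K'$ and uniformly bounded, so Ascoli--Arzel\`a provides a subsequence converging uniformly on $[t,T]$ to a function $y^2(\cdot)$; closedness of $\mathrm{Sol}^2(t,x;u)$ places $y^2(\cdot)\in\mathrm{Sol}^2(t,x;u)$. Extracting once more so that $(J_1'^N,J_2'^N)\to(J_1',J_2')$, the definition of $\Gamma$ gives $(J_1',J_2')\in S(t_+,y^2(t_+))$, and passing to the limit in $J_1^N\geq J_1'^N$ yields $J_1\geq J_1'$. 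Conditions (S3) and (S4) follow by symmetric arguments from properties 3 and 4 of Theorem \ref{th_disrcrete}.

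The principal difficulty is the joint passage to the limit: the controls $v^N\in\mathcal{V}^N$ need not converge in any useful topology, so no limiting control is extracted directly. Instead, Lemma \ref{lm_x_y_distance} transfers the issue to continuous trajectories driven by $v^N\in\mathcal{V}$, after which Ascoli--Arzel\`a and the closedness of $\mathrm{Sol}^2(t,x;u)$ produce a limit in the required solution set. A single diagonal extraction across $N$ must be arranged so that $(t^N,x^N)$, $(J_1^N,J_2^N)$, $t_+^N$, $\xi^N(t_+^N,\cdot)$, $(J_1'^N,J_2'^N)$ and the trajectory all converge along the same subsequence, and a further diagonal argument over an exhaustion by compacts is needed to pass from a local $S$ to one defined on the whole of $\ir$.
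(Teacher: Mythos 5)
Your proposal is correct and follows essentially the same route as the paper: a graph (Kuratowski--Painlev\'e/Hausdorff) limit of the discrete multifunctions $Z^N$ over an exhaustion by compacts with a diagonal extraction, nonemptiness from the uniform payoff bound, and verification of (S2) by iterating the one-step discrete property, transferring to continuous trajectories via Lemma \ref{lm_x_y_distance}, and closing the argument with compactness of ${\rm Sol}^2$ and closedness of the limit graph. The only cosmetic difference is that the paper first extends $Z^N$ to non-grid times via (\ref{S_N_def}) before passing to the limit, whereas you rely on density of the grids, which works equally well.
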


The proof of Theorem \ref{th_existence} is given in the end of the section.

First, for each  $N$ define the multivalued function $S^N:\ir\rightrightarrows\mathbb{R}^2$ by the following rule:
\begin{equation}\label{S_N_def}
    S^N(t,x):=\left\{\begin{array}{lr}
                       Z^N(t_k^N,x), & t\in (t_{k-1},t_{k}),\ \ k=\overline{1,N-1}  \\
                       Z^N(t_k,x)\cup Z^N(t_{k+1},x), & t=t_k,\ \ k=\overline{0,N-1} \\
                       Z^N(t_N^N,x), & t=T
                     \end{array}\right.
\end{equation}
The functions $S^N$ have the closed graph.

Denote $$B(\nu):=\{x:\|x\|\leq \nu\}.$$ For $\Sigma:\ir\rightrightarrows \mathbb{R}^2$ set
$$\gr{\nu} \Sigma:=\{(t,x, Y_1, Y_2):\|x\|\leq \nu, ( Y_1, Y_2)\in \Sigma(t,x)\}. $$

The sets $\gr{\nu}S^N$ are compacts. Indeed,  $$M_{i,\nu}:=\max\{|\sigma_i(x(T,t_*,x_*,u,v))|:t_*\in [0,T],\|x_*\|\leq \nu, u\in\mathcal{U},v\in\mathcal{V}\}<\infty.$$ We have that $\gr{\nu} S_N\subset [0,T]\times B(\nu)\times [-M_{1,\nu},M_{1,\nu}]\times [-M_{2,\nu},M_{2,\nu}].$

Consider the Hausdorff distance between compact sets ${A,B\subset\ir\times\mathbb{R}^2}$
\begin{equation*}\begin{split}
&h(A,B)\\
&:= \max\left\{\max_{(t,x, Y_1, Y_2)\in A}\mathrm{d}((t,x, Y_1, Y_2),B),\max_{(t,x, Y_1, Y_2)\in B}\mathrm{d}((t,x, Y_1, Y_2),A)\right\}.\end{split}
\end{equation*}

Here $\mathrm{d}((t,x, Y_1, Y_2),A)$ is the distance from the point $(t,x, Y_1, Y_2)$ to the set $A$ generated by the norm
$$\|(t,x, Y_1, Y_2)\|=|t|+\|x\|+| Y_1|+| Y_2|. $$

Since for any  $\nu$ the set $[0,T]\times B(\nu+1)\times [-M_{1,\nu},M_{1,\nu}]\times [-M_{2,\nu},M_{2,\nu}]$ is  compact, using  \cite[Theorem 4.18]{RockWets} we get that one can extract a convergent subsequence from the sequence $\{\grp{\nu} S^N\}_{N=1}^\infty$.

Using the diagonal process we construct the subsequence $\{N_j\}$ such that for any  $\nu$ there exists the limit
$$\lim_{j\rightarrow\infty}\grp{\nu}S^{N_j}=R_\nu.$$ One can choose the subsequence  $\{N_j\}$ satisfying the property:
$$h(\grp{\nu}S^{N_j},R_\nu)\leq 2^{-j}\mbox{ for }j\geq \nu.$$ Denote $\widetilde{S}_j:= S^{N_j}$.

\begin{lemma}\label{lm_plus}
Let $( Y_{1,l}, Y_{2,l})\in \widetilde{S}_{j_l}(t_l,x_l)$, $\|x_l\|\leq \nu+1$,  $(t_l,x_l)\rightarrow (t^*,x^*)$, $( Y_{1,l}, Y_{1,l})\rightarrow ( Y_1^*, Y_2^*)$, as $l\rightarrow\infty$. Then $(t^*,x^*, Y_1^*, Y_2^*)\in R_\nu$.
\end{lemma}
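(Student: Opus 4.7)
The argument is a routine Hausdorff-metric chase, so the plan is to exhibit $(t^*,x^*,Y_1^*,Y_2^*)$ as a limit of points in $R_\nu$ and invoke compactness. Implicit in the hypotheses (and needed for the statement to be nontrivial) is that $j_l\to\infty$; I will carry the argument under this assumption, which one may enforce by passing to a subsequence.

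First I would note that, by definition of $\grp{\nu}\widetilde S_{j_l}$, the tuple $(t_l,x_l,Y_{1,l},Y_{2,l})$ belongs to $\grp{\nu}\widetilde S_{j_l}$ for every $l$ since $\|x_l\|\le \nu+1$. From the Hausdorff bound obtained through the diagonal extraction,
\[
h\bigl(\grp{\nu}\widetilde S_{j_l},R_\nu\bigr)\le 2^{-j_l}\qquad\text{for }j_l\ge \nu,
\]
so for each sufficiently large $l$ there exists a point $q_l=(s_l,\xi_l,\eta_{1,l},\eta_{2,l})\in R_\nu$ with
\[
\|(t_l,x_l,Y_{1,l},Y_{2,l})-q_l\|\le 2^{-j_l}.
\]
(The distance is attained because $R_\nu$ is the Hausdorff limit of compact sets, hence itself compact, hence closed in the ambient compact box $[0,T]\times B(\nu+1)\times[-M_{1,\nu+1},M_{1,\nu+1}]\times[-M_{2,\nu+1},M_{2,\nu+1}]$.)

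Next I would let $l\to\infty$. Since $j_l\to\infty$, the right-hand side $2^{-j_l}$ tends to zero, so $q_l$ converges to the same limit as $(t_l,x_l,Y_{1,l},Y_{2,l})$, namely $(t^*,x^*,Y_1^*,Y_2^*)$. Because $R_\nu$ is closed, this limit lies in $R_\nu$, which is the desired conclusion.

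The main obstacle is essentially bookkeeping: one must be sure that $R_\nu$ is genuinely closed (inherited from being a Hausdorff limit of compacts inside a fixed compact ambient set) and that the radial bound $\|x_l\|\le\nu+1$ is the correct one so that $(t_l,x_l,Y_{1,l},Y_{2,l})$ actually sits in $\grp{\nu}\widetilde S_{j_l}$ rather than only in some larger graph, since the Hausdorff estimate is formulated at precisely this radius. No real analytic difficulty arises beyond this.
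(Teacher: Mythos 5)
Your proof is correct. Both arguments rest on exactly the same two inputs — the quantitative Hausdorff estimate $h(\grp{\nu}\widetilde S_{j_l},R_\nu)\le 2^{-j_l}$ coming from the diagonal extraction, and the closedness of $R_\nu$ as a Hausdorff limit of compacta inside a fixed compact box — but the logical packaging differs. You use only the one-sided half of the Hausdorff distance: each $(t_l,x_l,Y_{1,l},Y_{2,l})\in\grp{\nu}\widetilde S_{j_l}$ is within $2^{-j_l}$ of some $q_l\in R_\nu$, so the limit point lies in $\overline{R_\nu}=R_\nu$. The paper instead adjoins the candidate point to $R_\nu$, verifies that $R_\nu\cup\{(t^*,x^*,Y_1^*,Y_2^*)\}$ is also a Hausdorff limit of the same sequence $\grp{\nu}\widetilde S_{j_l}$ (which requires checking both directions of the Hausdorff distance), and concludes by uniqueness of Hausdorff limits that the union equals $R_\nu$. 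Your version is the more direct and more elementary of the two, and it avoids the slightly delicate appeal to uniqueness of set limits. Your two bookkeeping points are also handled properly: $\|x_l\|\le\nu+1$ is precisely the radius at which $\grp{\nu}$ is defined, so membership in $\grp{\nu}\widetilde S_{j_l}$ holds, and the assumption $j_l\to\infty$ (which you flag explicitly) is equally implicit in the paper's proof, since $\{j_l\}$ is there a subsequence of indices.
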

\begin{proof}
Consider the set $R_\nu\cup \{(t^*,x^*, Y_1^*, Y_2^*)\}$. This set is closed. We claim that
\begin{equation}\label{R_nu_limits}
h(\grp{\nu}\widetilde{S}_{j_l},R_\nu\cup\{(t^*,x^*, Y_1^*, Y_2^*)\})\rightarrow 0,\ \  l\rightarrow\infty.
\end{equation}
Indeed, $\mathrm{d}((t,x,Y_1,Y_2),R_\nu\cup\{(t^*,x^*, Y_1^*, Y_2^*)\})\leq \mathrm{d}((t,x,Y_1,Y_2),R_\nu)$ for all $(t,x,Y_1,Y_2)\leq \grp{\nu}\widetilde{S}_{j_l}$.
Hence
\begin{equation}\label{converge_haus_1}
\max_{(t,x,Y_1,Y_2)\in \grp{\nu}\widetilde{S}_{j_l}}\mathrm{d}((t,x,Y_1,Y_2),R_\nu\cup\{(t^*,x^*, Y_1^*, Y_2^*)\})\rightarrow 0, \mbox{ as }l\rightarrow \infty.
\end{equation}

Further,  the following convergence is valid: $$\max_{(t,x, Y_1, Y_2)\in R_\nu\cup \{(t^*,x^*, Y_1^*, Y_2^*)\}}\{\mathrm{d}((t,x, Y_1, Y_2),\grp{\nu}\widetilde{S}_{j_l})\}\rightarrow 0, \mbox{ as } l\rightarrow\infty. $$
This and (\ref{converge_haus_1}) yield (\ref{R_nu_limits}).

Formula (\ref{R_nu_limits}) means that
$$R_\nu\cup\{(t^*,x^*, Y_1^*, Y_2^*)\}=\lim_{l\rightarrow\infty}\grp{\nu}\widetilde{S}_{j_l}=R_\nu. $$ This completes the proof. 
\end{proof}

\begin{lemma}\label{lm_R_repres} For $r>\nu$ the following equality holds: $$R_{r}\cap ([0,T]\times B(\nu)\times\mathbb{R}^2)=R_\nu\cap ([0,T]\times B(\nu)\times\mathbb{R}^2). $$
\end{lemma}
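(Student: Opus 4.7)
The plan is to prove both inclusions directly from Lemma \ref{lm_plus}, leveraging the key observation that the membership relation $(Y_1,Y_2) \in \widetilde{S}_j(t,x)$ does not depend on which ``window'' parameter $\nu$ we use; only the restriction $\|x\| \leq \nu+1$ differs. Since on $\|x\| \leq \nu$ that bound is automatic for both $\grp{\nu}\widetilde{S}_j$ and $\grp{r}\widetilde{S}_j$ (when $r > \nu$), the two graph truncations give the same approximating sequences, and Lemma \ref{lm_plus} then transfers the limit point into whichever $R$-set we want.

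First I would prove the inclusion $R_r \cap ([0,T] \times B(\nu) \times \mathbb{R}^2) \subseteq R_\nu \cap ([0,T] \times B(\nu) \times \mathbb{R}^2)$. Fix a point $(t^*, x^*, Y_1^*, Y_2^*)$ in the left-hand side, so $\|x^*\| \leq \nu$. Since $h(\grp{r}\widetilde{S}_j, R_r) \to 0$, there is a sequence of points $(t_l, x_l, Y_{1,l}, Y_{2,l}) \in \grp{r}\widetilde{S}_{j_l}$ converging to $(t^*, x^*, Y_1^*, Y_2^*)$. In particular $(Y_{1,l}, Y_{2,l}) \in \widetilde{S}_{j_l}(t_l, x_l)$. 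Because $x_l \to x^*$ and $\|x^*\| \leq \nu < \nu+1$, we have $\|x_l\| \leq \nu+1$ for all sufficiently large $l$. Passing to this tail and invoking Lemma \ref{lm_plus} (with parameter $\nu$) yields $(t^*, x^*, Y_1^*, Y_2^*) \in R_\nu$, which already lies in $[0,T] \times B(\nu) \times \mathbb{R}^2$.

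The reverse inclusion is proved symmetrically. Take $(t^*, x^*, Y_1^*, Y_2^*) \in R_\nu$ with $\|x^*\| \leq \nu$. By the Hausdorff convergence $\grp{\nu}\widetilde{S}_{j} \to R_\nu$, choose $(t_l, x_l, Y_{1,l}, Y_{2,l}) \in \grp{\nu}\widetilde{S}_{j_l}$ converging to the target. These points satisfy $\|x_l\| \leq \nu+1 \leq r+1$ by assumption, and $(Y_{1,l}, Y_{2,l}) \in \widetilde{S}_{j_l}(t_l, x_l)$, so they actually belong to $\grp{r}\widetilde{S}_{j_l}$ as well. Applying Lemma \ref{lm_plus} with the parameter $r$ in place of $\nu$ (the lemma is proved for any such threshold) yields $(t^*, x^*, Y_1^*, Y_2^*) \in R_r$, which completes the proof.

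There is no significant obstacle: the argument is just that Lemma \ref{lm_plus} is really a ``closedness of the pointwise graph under diagonal limits'' statement independent of the truncation radius, and the two $R$-sets inherit from the same diagonal subsequence $\{N_j\}$. The only point that requires a moment of care is that Lemma \ref{lm_plus}, although stated for a fixed $\nu$, applies verbatim with $r$ in place of $\nu$ because the constant $\nu$ plays no special role in its proof beyond bounding $\|x_l\|$; one should note this explicitly when invoking it in the second inclusion.
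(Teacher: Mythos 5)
Your proof is correct and follows essentially the same route as the paper: approximate the given point by points of the truncated graphs $\grp{r}\widetilde{S}_{j}$ (resp. $\grp{\nu}\widetilde{S}_{j}$) via the Hausdorff convergence, observe that these approximants eventually satisfy the other radius bound, and invoke Lemma \ref{lm_plus} with the appropriate radius. Your explicit remark that Lemma \ref{lm_plus} holds with $r$ in place of $\nu$ is exactly the point the paper leaves implicit in its ``the opposite inclusion is proved in the same way.''
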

\begin{proof}
Let $(t,x, Y_1, Y_2)\in R_r$, $\|x\|\leq \nu$, and $j\geq r$. There exists a quadruple $(\theta_j,y_j,\zeta_{1,j},\zeta_{2,j})\in \grp{r}\widetilde{S}_j$ such that
\begin{equation}\label{point_s_rep}
    |t-\theta_j|+\|x-y_j\|+| Y_1-\zeta_{1,j}|+| Y_2-\zeta_{2,j}|=\mathrm{d}((t,x, Y_1, Y_2),\grp{r}\widetilde{S}_j)\leq 2^{-j}.
\end{equation}
Therefore,  $\|x-y_j\|\leq\mathrm{d}((t,x, Y_1, Y_2),\grp{r}\widetilde{S}_j)\leq 2^{-j}$. We have that $\|y_j\|\leq \|x\|+2^{-j}\leq \nu+1$. Therefore,  $(\theta_j,y_j,\zeta_{1,j},\zeta_{2,j})\in \grp{\nu}\widetilde{S}_j$. It follows from formula (\ref{point_s_rep}) and lemma  \ref{lm_plus}  that $(t,x, Y_1, Y_2)\in R_\nu$. Since the quadrable $(t,x, Y_1, Y_2)$ satisfies the condition  $\|x\|\leq \nu$ we conclude that
$$R_{r}\cap ([t_0,T]\times B(\nu)\times\mathbb{R}^2)\subset R_\nu\cap ([t_0,T]\times B(\nu)\times\mathbb{R}^2). $$ The opposite inclusion is proved in the same way.

\end{proof}

Define the multivalued function $\bar{S}:\ir\rightrightarrows\mathbb{R}^2$ by the following rule: for $\|x\|\leq\nu$
$$\bar{S}(t,x):=\{( Y_1, Y_2):(t,x, Y_1, Y_2)\in R_\nu\}.$$ Note that this definition is correct by virtue of lemma \ref{lm_R_repres}.
We have that $\gr{\nu}\bar{S}=R_\nu\cap ([t_0,T]\times B(\nu)\times\mathbb{R}^2)$.  

\vspace{5pt}
\noindent\textit{Proof of theorem \ref{th_existence}}
We shall show that the function  $\bar{S}$ has nonempty images, and satisfies conditions (S\ref{cond_boundary_m})--(S\ref{cond_together_stable_m}).

First we shall prove that the sets $\bar{S}(t,x)$ are nonempty. Let $\nu$ satisfy the condition $\|x\|< \nu$, and let $( Y_{1,j}, Y_{2,j})\in \widetilde{S}_{j}(t,x)$. Since $\widetilde{S}_{j}(t,x)\subset [-M_{1,\nu},M_{1,\nu}]\times [-M_{2,\nu},M_{2,\nu}]$, there exists a subsequence $\{( Y_{1,j_l}, Y_{2,j_l})\}_{l=1}^\infty$ converging to  a pair $( Y_1^*, Y_2^*)$. By lemma \ref{lm_plus} we obtain that $( Y_1^*, Y_2^*)\in\bar{S}(t,x)$.

Now let us prove that the multivalued function $\bar{S}$ satisfies conditions (S1)--(S4).

We begin with condition (S1). Let $x_*\in\mathbb{R}^n$. Choose $\nu$  such that the following conditions hold
\begin{enumerate}
\item $x(t,T,x_*,u,v)\in B(\nu)$ for all $t\in [0,T]$, $u\in \mathcal{U}$, $v\in\mathcal{V}$;
\item  all  $z$ such that $x_*=\xi^N(T,t,z,u,v)$ for some natural $N$, $t\in \Delta^N$, $u\in\mathcal{U}^N$, $v\in\mathcal{V}^N$ belong to $B(\nu)$.
\end{enumerate}
Let $K_\nu$ be defined by (\ref{K_def}) for $E=B(\nu+1)$.

Let $N$  be a natural number,  $t_*\in\Delta^N$, and  $\xi_*\in B(\nu)$. By conditions~1 and~4 of  Theorem \ref{th_disrcrete} we have that if $( Y_1, Y_2)\in Z^N(t_*,\xi_*)$, then there exist  $u\in\mathcal{U}^N$, and $v\in\mathcal{V}^N$ such that
\begin{equation}\label{J_i_sigma}
 Y_i=\sigma_i(\xi^N(T,t_*,\xi_*,u,v)),  \ \ i=1,2.
\end{equation}
We have  the estimate
\begin{equation}\label{y_N_y_estima}
    \|\xi_*-\xi^N(T,t_*,\xi_*,u,v)\|\leq K_\nu(T-t_*).
\end{equation}

Let $(J_1,J_2)\in \bar{S}(T,x)$. This means that there exists a sequence  $\{(t_j,x_j, Y_{1,j}, Y_{2,j})\}_{j=1}^\infty$ such that $( Y_{1,j}, Y_{2,j})\in \widetilde{S}_{j}(t_j,x_j)=S^{N_{j}}(t_j,x_j)$, and $t_j\rightarrow T$, $x_j\rightarrow x$, $ Y_{i,j}\rightarrow J_i$ as $j\rightarrow\infty$. Let $\theta_j\in\Delta^{N_j}$ be such that $( Y_{1,j}, Y_{2,j})\in Z^{N_j}(\theta_j,x_j)$ and $t_j\in (\theta_j-\delta^N,\theta_j]$.   Combining this, (\ref{J_i_sigma}), and (\ref{y_N_y_estima}) we conclude that for any $j$ there exists $x_j'\in B(\nu)$ such that $\|x_j-x_j'\|\leq K_\nu(T-t_j)$ and $ Y_{i,j}=\sigma_i(x_j')$, $i=1,2$. We have that $x_j'\rightarrow x$, as $j\rightarrow\infty$. By the continuity of the functions $\sigma_i$ we obtain that
$$J_i=\lim_{l\rightarrow\infty} Y_{i,j}=\lim_{j\rightarrow\infty} \sigma_i(x_j')=\sigma_i(x).$$

Now we shall prove the fulfillment of condition (S2). Let $(t_*,x_*)\in \ir$, $(J_1,J_2)\in \bar{S}(t_*,x_*)$, $u\in P$, $t_+\in [t_*,T]$. We shall show that there exists $y^2(\cdot)\in {\rm Sol}^2(t_*,x_*,u)$ such that
$J_1'\leq J_1$ for some  $(J_1',J_2')\in \bar{S}(t_+,y^2(t_+))$.

There exists a sequence $\{(t_j,x_j, Y_{1,j}, Y_{2,j})\}_{j=1}^\infty$ such that $( Y_{1,j}, Y_{2,j})\in \widetilde{S}_{j}(t_j,x_j)=S^{N_{j}}(t_j,x_j)$, and $t_j\rightarrow t_*$, $x_j\rightarrow x_*$, $ Y_{i,j}\rightarrow J_i$, as $j\rightarrow\infty$. Let $\theta_j$ be an element of $\Delta^{N_j}$ such that $( Y_{1,j}, Y_{2,j})\in Z^{N_j}(\theta_j,x_j)$ and $t_j\in (\theta_j-\delta^N,\theta_j]$. Further, let $\tau_j$ be the least element of $\Delta^{N_j}$ such that $t_+\leq \tau_j$.

By condition 2 of Theorem \ref{th_disrcrete} for each $j$ there exist a control $v_j\in \mathcal{V}^{N_j}$, and a pair $( Y_{1,j}', Y_{2,j}')$ such that $( Y_{1,j}', Y_{2,j}')\in Z^{N_j}(\tau_j,\xi^{N_j}(\tau_j,\theta_j,x_j,u,v_j))\subset \widetilde{S}^j(\tau_j,\xi^{N_j}(\tau_j,\theta_j,x_j,u,v_j))$ and $ Y_{1,j}'\leq Y_{1,j}$. 
By lemma \ref{lm_x_y_distance} we have that $$\|x(\tau_j,\theta_j,x_j,u,v_j)-\xi^{N_j}(\tau_j,\theta_j,x_j,u,v_j)\|\leq \varphi'(\delta^{N_j})\exp(LT).$$
We may extract a subsequence $\{j_l\}_{l=1}^\infty$ such that $\{x(\cdot,\theta_{j_l},x_{j_l},u,v_{j_l})\}_{l=1}^\infty$ converges to some motion $y^2(\cdot)$, and $\{( Y_{1,j_l}', Y_{2,j_l}')\}$ converges to some pair $(J_1',J_2')$. We have that $y^2(\cdot)\in {\rm Sol}^2(t_*,x_*,u)$. Lemma \ref{lm_plus} 
gives the inclusion $(J_1',J_2')\in \overline{S}(t_+,y^2(t_+))$. We also have $$J_1'\leq J_1. $$ This completes the proof of condition (S2).

Conditions  (S3) and (S4) are proved analogously.

\qed

\section{Conclusion}
In this paper the Nash equilibria for differential games in the class of control with guide strategies are constructed on the basis of  an upper semicontinuous  multivalued function satisfying boundary condition  and some viability conditions. 
The main result is that for any compact of initial positions and any selector of the multivalued map it is possible to construct a Nash equilibrium such that the corresponding  players' payoff  is equal to the value of the given selector. The existence of the multivalued function satisfying proposed conditions is also proved.
If the upper semicontinuous multivalued function is replaced with  a continuous function, then the construction of the strategies is simplified. However, in the general case the desired continuous function doesn't exist.

Only  two players nonzero-sum differential games with terminal payoffs and compact control spaces were considered. The results can be extended to the games with payoffs equal to the sum of terminal and running parts by introducing new variables describing running payoffs. Note that if the running payoff of each player  doesn't depend on the control of the another one, then the players need only the information about the state variable to construct the Nash equilibrium control with guide strategies. The condition of compactness of control spaces is essential, and the methods developed in the paper can't be used for the games with unbounded control spaces. (Such games were studied by Bressan and Shen in \cite{Bres2}, \cite{Bres3} on the basis of BV solutions of PDEs.)

Future work includes the extension of the obtained results to the game with many players and the stability analysis of proposed conditions.




\begin{thebibliography}{99}
\bibitem{Viability} Aubin, J.-P.: Viability theory.
Birkha\"{u}ser, Basel (1992)

\bibitem{AverboukhIMM} Averboukh, Yu.: Infinitesimal characterization of a feedback nash equilibrium in a differential game. Proc. Steklov Inst. Math. Suppl. 271, 28--40 (2009)

\bibitem{AverboukhSb} Averboukh, Yu.:
Nash equilibrium in differential games and the construction of the programmed iteration method. Sb. Math. 202, 621-648 (2011)

\bibitem{AverboukhAnnals} Averboukh, Yu.: Characterization of Feedback Nash Equilibrium for Differential Games. In: Cardaliaguet, P.,
Cressman, R. (eds.): Advances in dynamic game theory. Ann. internat. soc. dynam. games, vol. 12, pp. 109--125.
Birkha\"{u}ser, Boston (2013)

\bibitem{Basar} Ba\c{s}ar, T.,  Olsder, G.J.: Dynamic Noncooperative Game Theory.
SIAM, Philadelphia (1999)



\bibitem{Bres2} Bressan, A., Shen, W.: Semi-cooperative strategies for differential games. Internat. J. Game Theory
32, 561--59 (2004)


\bibitem{Bres3} Bressan, A., Shen, W.: Small BV solutions of hyperbolic noncooperative differential games. SIAM
J. Control Optim.
{43},  194–-215 (2004)

\bibitem{cardal_survey} Buckdahn, R., Cardaliaguet, P., Quincampoix, M.: Some recent aspects of differential game
theory. Dyn. Games Appl. 1, 74--114 (2011)

\bibitem{cardal_instable} Cardaliaguet,  P.: On the instability of the feedback equilibrium payoff in a nonzero-sum differential game on the line. In: Jorgensen, S.,  Quincampoix, M., Vincent, T.L. (eds.): Advances in dynamic game theory. Ann. internat. soc. dynam. games, vol. 9,
 pp. 57--67. Birkha\"{u}ser, Boston (2007)


\bibitem{Cardal_Plaskatz}   Cardaliaguet, P., Plaskacz, S.: Existence and uniqueness of a Nash equilibrium feedback  for a simple nonzero-sum differential game. Internat. J. Game Theory {32}, 33--71 (2003)

\bibitem{Case}  Case, J.H.: Towards a theory of many players differential games. SIAM
J. Control 7, 179--197 (1969)

\bibitem{Chistyakov_Nash} Chistyakov, S.V.: On noncooperative differential games. Dokl.
Akad. Nauk SSSR {259}, 1052--1055 (1981, in Russian).

\bibitem{Friedman} Friedman, A.: Differential Games.  Wiley, New York (1971)


\bibitem{Cleimenov} Kleimenov, A.F.: Non zero-sum differential games. 
Ekaterinburg, Nauka (1993, in Russian)

\bibitem{Kononenko}Kononenko, A.F.: On equilibrium positional strategies in nonantagonistic differential games. Dokl.
Akad. Nauk SSSR {231}, 285--288, (1976, in Russian).

\bibitem{krasovskii_under_lack} Krasovskii, A.N., Krasovskii, N.N.: Control Under Lack of Information.
Birkh\"{a}user, Basel (1995)

\bibitem{NN_PDG_en} Krasovskii, N.N., Subbotin, A.I.: Game-Theoretical Control
Problems. Springer, New  York (1988)





\bibitem{RockWets} Rockafellar, R.T., Wets, R.: Variational Analysis.  Grundlehren der Mathematischen Wissenschafte, vol. 317.
Springer, New York (2009)

\bibitem{Subbotina} Subbotina, N.N.: Universal optimal strategies in positional differential games. Differents. Uravneniya 19,
1890--1896 (1983, in Russian)

\bibitem{Subb_book} Subbotin, A.I.: Generalized solutions of first-order PDEs. The dynamical perspective. 
Birkha\"{u}ser, Boston (1995)







\bibitem{Tolwinski} Tolwinski, B., Haurie, A., Leitman, G.: Cooperate Eguilibria in Differential Games.  J. Math. Anal. Appl.  112,  182--192 (1986)
\end{thebibliography}
\end{document}